\newtheorem{theorem}{Theorem} [section] 
\newtheorem{lemma}[theorem]{Lemma} 
\theoremstyle{definition}
\theoremstyle{remark}
\def \isecp{i \in \{S,E,C,P\}}
\def \ns{n_S}
\def \nc{n_C}
\def \ne{n_E}
\def \np{n_P}
\def \nsR{n_{S,\infty}}
\def \ncR{n_{C,\infty}}
\def \neR{n_{E,\infty}}
\def \npR{n_{P,\infty}}
\def \nsRO{\overline{n_S}}
\def \ncRO{\overline{n_C}}
\def \neRO{\overline{n_E}}
\def \npRO{\overline{n_P}}
\def \NSO{\overline{N_S}}
\def \NCO{\overline{N_C}}
\def \NEO{\overline{N_E}}
\def \NPO{\overline{N_P}}
\def \NiO{\overline{N_i}}
\def \delSO{\overline{\delta_S^2}}
\def \delCO{\overline{\delta_C^2}}
\def \delEO{\overline{\delta_E^2}}
\def \delPO{\overline{\delta_P^2}}
\def \deliO{\overline{\delta_i^2}}
\def \NSI{N_{S,\infty}}
\def \NCI{N_{C,\infty}}
\def \NEI{N_{E,\infty}}
\def \NPI{N_{P,\infty}}
\def \NiI{N_{i,\infty}}
\newcommand{\re}{\mathbb{R}}
\newcommand{\dx}{\, \textnormal{d}x}
\newcommand{\dt}{\, \textnormal{d}t}
\title{Trend to equilibrium for a reaction-diffusion system modelling reversible enzyme reaction}
\author{J\'{a}n Elia\v{s}}
\affil{\small Laboratoire de Math\'{e}matiques d'Orsay, Univ. Paris-Sud, CNRS, Universit\'{e} Paris-Saclay, 91405 Orsay, France}
\affil{\small Sorbonne Universit\'{e}s, Inria, UPMC Univ Paris 06, Lab. J.L. Lions  UMR CNRS 7598, Paris, France}
\date{}                     
\begin{document}




\maketitle

\begin{abstract}
A spatio-temporal evolution of chemicals appearing in a reversible enzyme reaction and modelled by a four component reaction-diffusion system with the reaction terms obtained by the law of mass action is considered. The large time behaviour of the system is studied by means of entropy methods. \\

\noindent \textit{Keywords: enzyme reaction; reaction-diffusion system; trend to equilibrium; entropy; duality method}
\end{abstract}

\section{Introduction}

In eukaryotic cells, responses to a variety of stimuli consist of chains of successive protein interactions where enzymes play significant roles, mostly by accelerating reactions. Enzymes are catalysts that facilitate a conversion of molecules (generally proteins) called substrates into other molecules called products, but they themselves are not changed by the reaction.
In the reaction scheme proposed by Michaelis and Menten~\cite{Michaelis-1913} in 1913, an enzyme $E$ converts a substrate $S$ into a product $P$ through a two step process, schematically written as
\begin{equation} \label{eq:ER:s1:00}
S + E \underset{k_{-}}{\overset{k_{+}}{\rightleftharpoons}} C  \overset{k_{p+}}{\longrightarrow} E + P,
\end{equation}
where $C$ is an intermediate complex and $k_{+}, k_{-}$ and $k_{p+}$ are positive kinetic rates of the reaction. In 1925, the enzyme reaction~(\ref{eq:ER:s1:00}) was analysed by Briggs and Haldane~\cite{Briggs-1925} by using ordinary differential equations (ODE) derived from mass action kinetics. In their quasi-steady state approximation (QSSA), the complex is assumed to reach a steady state quickly, i.e., there is no change in its concentration $\nc=[C]$ in time ($\textnormal{d} \nc/\textnormal{d} t = 0$). The analysis yields an algebraic expression, the so-called Michaelis-Menten function, for $\nc$ and a simple, though nonlinear, ODE for the substrate's concentration $\ns=[S]$. The kinetics of enzyme reactions described by Briggs and Haldane is sometimes called the Michealis-Menten kinetics. Further details on the existing approximation techniques can be found in \cite{Cornish-Bowden-2012, Schnell-2000, Schnell-2002, Segel-1989}, a validation of the QSSA also in \cite{Perthame-LN}.

Many important reactions in biochemistry are, however, reversible in the sense that a significant amount of the product $P$ exists in the reaction mixture due to a reaction of $P$ with the enzyme $E$,~\cite{Cornish-Bowden-2012}. Therefore, the Michaelis-Menten mechanism~(\ref{eq:ER:s1:00}) is incomplete for these reactions and should be rather replaced by
\begin{equation} \label{eq:ER:s2:09}
S + E \underset{k_{-}}{\overset{k_{+}}{\rightleftharpoons}} C  \underset{k_{p-}}{\overset{k_{p+}}{\rightleftharpoons}} E + P.
\end{equation}


Almost entire mathematical modelling of the enzyme reactions~(\ref{eq:ER:s1:00}) and~(\ref{eq:ER:s2:09}) is usually done by using ODE approaches, \cite{Cornish-Bowden-2012, Schnell-2000, Schnell-2002, Segel-1989}. However, protein pathways occur in living cells, (heterogenous) spatial structure of which has an impact on the enzyme efficiency and the speed of enzyme reactions. In this paper, a spatial reaction-diffusion system for the reversible enzyme reaction~(\ref{eq:ER:s2:09}) is studied without any kind of approximation.
More precisely, we will consider a system of four equations for the concentrations $n_i$, $\isecp$, for the species appearing in~(\ref{eq:ER:s2:09}) with the reaction terms obtained by the law of mass action. Moreover, we assume that the species can diffuse freely and randomly (modelled by linear diffusion) with constant diffusion rates. Thus, we consider the system 
\begin{equation} \label{eq:ER:s2:10} \begin{aligned}
\dfrac{\partial\ns}{\partial t} - D_S \Delta \ns & = k_{-} \nc - k_{+} \ns \ne,\\
\dfrac{\partial \ne}{\partial t} - D_E \Delta \ne  &= (k_{-}+k_{p+}) \nc - k_{+} \ns \ne - k_{p-}\ne\np,\\ 
\dfrac{\partial\nc}{\partial t} - D_C \Delta \nc  & = k_{+} \ns \ne - (k_{-}+k_{p+}) \nc + k_{p-}\ne\np, \\
\dfrac{\partial\np}{\partial t}  - D_P \Delta \np  &= k_{p+} \nc - k_{p-}\ne\np.
\end{aligned}\end{equation}
It is assumed that for each $\isecp$ $n_i = n_i(t,x)$ is defined on an open, bounded domain $\Omega \subset \re^d$ with a sufficiently  smooth boundary $\partial \Omega$ (e.g., $C^2$) and a time interval $I = [0,T]$ for $0<T<\infty$, $Q_T = I \times \Omega$. Without loss of generality, we assume $\vert \Omega \vert = 1$. The diffusion coefficients $D_i$ are supposed to be positive constants, possibly different from each other. Further, we assume that there exist nonnegative measurable functions $n_i^0$ such that 
\begin{equation} \label{eq:ER:s2:11}
n_i(0,x) = n_i^0(x) \; \text{ in } \Omega, \quad \int_{\Omega} n_i^0(x) \dx > 0, \quad \forall \, \isecp.
\end{equation}
Finally, the system is coupled with the zero-flux boundary conditions 
\begin{equation} \label{eq:ER:s2:12}
\nabla n_i \cdot \nu = 0, \quad \forall t \in I, \; x \in \partial \Omega, \;  \isecp,
\end{equation}
where $\nu$ is a unit normal vector pointed outward from the boundary $\partial \Omega$. 

Two linearly independent conservation laws can be observed, in particular, 
\begin{eqnarray}
& {\displaystyle \int_{\Omega}} (\ne + \nc)(t,x) \dx = {\displaystyle \int_{\Omega}} (\ne^0+\nc^0)(x) \dx = M_1, \label{eq:ER:s2:02} \\
& {\displaystyle \int_{\Omega}} (\ns + \nc + \np)(t,x) \dx = {\displaystyle \int_{\Omega}} (\ns^0 + \nc^0 + \np^0) (x) \dx = M_2, \label{eq:ER:s2:03}
\end{eqnarray}
for each $t \ge 0$, where $M_1 >0, M_2 > 0$.  
Note that there is often $M_1 \ll M_2$  \cite{Cornish-Bowden-2012}, however, we will not assume any relation between $M_1$ and $M_2$.

The conservations laws~(\ref{eq:ER:s2:02}) and~(\ref{eq:ER:s2:03}) imply the uniform $L^1$ bounds on the solutions of~({\ref{eq:ER:s2:10}})-({\ref{eq:ER:s2:12}}) which are insufficient for the existence of global solutions. 
A global weak solution in all space dimensions ($d \ge 1$), however, can be deduced from a combination of a duality argument (see Appendix), which provides estimates on the (at most quadratic) nonlinearities of the system, and an approximation method developed in \cite{PSch, DFPV}, which justifies rigorously the existence of the weak solution to~(\ref{eq:ER:s2:10})-({\ref{eq:ER:s2:12}}) builded up from the solutions of the approximated system. The existence of the global weak solution with the total mass conserved by means of~(\ref{eq:ER:s2:02}) and~(\ref{eq:ER:s2:03}) can be shown constructively by the semi-implicit (Rothe) method \cite{Roubicek-2013, EliasThesis}, a method suitable for numerical simulations. We also refer to~\cite{Bothe-2015} where a proof of the existence of the unique, global-in-time solution to~(\ref{eq:ER:s2:10})-({\ref{eq:ER:s2:12}}) with the concentration dependent diffusivities and $d \le 9$ is obtained by a combination of the duality and bootstrapping arguments.
Therefore, we do not give any rigorous results on the existence of solutions; 
instead, we focus on the large time behaviour of the solution to its equilibrium as $t \to \infty$. However, we derive a-priori estimates which make all the integrals that appear (e.g., entropy functional) well defined.

In particular, by a direct application of a duality argument (see Appendix), we deduce that whenever $n_i^0 \in L^2(\log L)^2(\Omega)$, then $n_i \in L^2(\log L)^2(Q_T)$ for each $0< T <\infty$, $\isecp$. 
With the $L^2(\log L)^2$ estimates at hand, the solution $n_i$ for $\isecp$ can be shown, as in \cite{Bothe-2015}, to belong to $L^{\infty}((0,\infty)\times \overline{\Omega})$ by using the properties of the heat kernel combined with a bootstrapping argument and by assuming sufficiently regular  initial data and $\partial \Omega$. Thus, we can deduce the global-in-time existence of the classical solution (that is bounded solution which has classical derivatives at least \emph{a.e.} and the equations in~({\ref{eq:ER:s2:10}}) are understood pointwise) by the standard results for reaction-diffusion systems \cite{Lady1968}. 

The main result of this paper is a quantitative analysis of the large time behaviour of the solution $n_i$, $\isecp$, to~({\ref{eq:ER:s2:10}})-({\ref{eq:ER:s2:12}}). It can be stated as follows:
\begin{theorem} \label{theorem_ER_02}
Let $(n_S, n_E, n_C, n_P)$ be a solution to~({\ref{eq:ER:s2:10}})-({\ref{eq:ER:s2:12}}) satisfying~(\ref{eq:ER:s2:02}) and~(\ref{eq:ER:s2:03}). Then there exist two explicitly computable constants $C_1$ and $C_2$ such that 
\begin{equation} \label{eq:ER:s2:27}
\sum_{i\in\{S,E,C,P\}} \Vert n_i - n_{i, \infty} \Vert_{L_1(\Omega)}^2 \le C_2 e^{ -C_1 t}
\end{equation}
where $n_{i, \infty}$ is the unique, positive, detailed balance steady state defined in~(\ref{eq:ER:s2:18}).
\end{theorem}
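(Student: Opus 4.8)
The plan is to apply the entropy method. I would introduce the relative entropy (free energy) functional
\[
E[n] = \sum_{\isecp} \int_\Omega \left( n_i \log \frac{n_i}{n_{i,\infty}} - n_i + n_{i,\infty} \right) \dx,
\]
which is nonnegative and vanishes exactly at the equilibrium $(n_{i,\infty})$. The argument then proceeds in four stages: compute the entropy dissipation $D[n] := -\tfrac{d}{dt} E[n]$; prove an entropy--entropy-dissipation (EED) inequality $D[n] \ge \lambda\, E[n]$ for an explicit $\lambda>0$; integrate by Gronwall to get exponential decay of $E$; and convert to $L^1$ decay via the Csisz\'ar--Kullback--Pinsker (CKP) inequality. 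The a-priori $L^2(\log L)^2(Q_T)$ bounds recorded in the introduction guarantee that every integral below is well defined.

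First I would differentiate $E$ along the flow and integrate by parts, using the zero-flux boundary conditions~(\ref{eq:ER:s2:12}) to discard boundary terms and the detailed-balance relations defining $n_{i,\infty}$ in~(\ref{eq:ER:s2:18}). This should yield
\[
D[n] = \sum_{\isecp} 4 D_i \int_\Omega \left| \nabla \sqrt{n_i} \right|^2 \dx + \int_\Omega \Big[ (k_{+} \ns \ne - k_{-} \nc)\log\tfrac{k_{+}\ns\ne}{k_{-}\nc} + (k_{p+}\nc - k_{p-}\ne\np)\log\tfrac{k_{p+}\nc}{k_{p-}\ne\np} \Big] \dx,
\]
in which each reaction contribution has the form $(a-b)(\log a - \log b) \ge 0$; hence $D[n]\ge 0$ and both the diffusive and the reactive dissipation are separately nonnegative.

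The core of the proof---and the step I expect to be the main obstacle---is the functional EED inequality. I would use the additivity of the relative entropy, $E[n] = E_{\mathrm{sp}}[n] + E_{\mathrm{ch}}[\overline{n}]$, splitting each $n_i$ into its spatial average $\overline{n_i} = \int_\Omega n_i \dx$ and fluctuation (recall $|\Omega|=1$). The spatial part $E_{\mathrm{sp}}[n] = \sum_i \int_\Omega n_i \log(n_i/\overline{n_i})\dx$ is dominated by the diffusive dissipation through the logarithmic Sobolev inequality available under the zero-flux condition. It then remains to bound the chemical part $E_{\mathrm{ch}}[\overline{n}]$, the relative entropy of the averages, by the reaction dissipation, which reduces matters to a finite-dimensional EED inequality for the spatially homogeneous (ODE) system, where the equilibrium is pinned down by the two conservation laws~(\ref{eq:ER:s2:02})--(\ref{eq:ER:s2:03}). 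The delicate point is reconciling the square-root structure of the diffusive dissipation with the linear conservation constraints and the quadratic mass-action terms: following the by-now-standard scheme of Desvillettes, Fellner and coauthors, I would pass between $\overline{n_i}$ and $\overline{\sqrt{n_i}}^2$ via Jensen's inequality and Poincar\'e (absorbing the discrepancy into the diffusive dissipation), and then establish the lower bound on the averaged reaction dissipation by explicit algebraic comparison of the state with $n_{i,\infty}$, using $M_1$ and $M_2$ to confine the trajectory to the equilibrium manifold.

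Once $D[n] \ge \lambda E[n]$ is proved, Gronwall's lemma gives $E(t) \le E(0)\, e^{-\lambda t}$. The CKP inequality then supplies a constant $C$ with $\Vert n_i - n_{i,\infty} \Vert_{L^1(\Omega)}^2 \le C\, E[n]$ for each species, and summing over $\isecp$ delivers~(\ref{eq:ER:s2:27}) with $C_1 = \lambda$ and $C_2 = C\, E(0)$, both explicit in the kinetic rates, the diffusion constants, the conserved masses and the constants in the functional inequalities on $\Omega$.
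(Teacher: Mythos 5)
Your outline coincides step for step with the paper's own proof: the same relative entropy~(\ref{eq:ER:s2:24}), the same dissipation identity, the same additive splitting $E(\mathbf{n}\vert\mathbf{n}_{\infty}) = E(\mathbf{n}\vert\overline{\mathbf{n}}) + E(\overline{\mathbf{n}}\vert\mathbf{n}_{\infty})$ with the logarithmic Sobolev inequality absorbing the first piece into the Fisher information, and Gronwall plus Csisz\'ar--Kullback--Pinsker at the end. The architecture is correct and nothing in it would fail.

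The gap is that the step you yourself flag as ``the main obstacle'' --- bounding $E(\overline{\mathbf{n}}\vert\mathbf{n}_{\infty})$ by the averaged reaction dissipation plus the variances --- is where essentially all of the work and all of the novelty of this paper lie, and ``explicit algebraic comparison of the state with $n_{i,\infty}$'' is a description of a goal, not a proof. Concretely, after passing to square roots $N_i=\sqrt{n_i}$ the paper must establish inequality~(\ref{eq:ER:s2:48}), and it does so by two devices you do not supply: first, the expansion of Lemma~\ref{lemma_ER_01}, which controls $(\sqrt{\overline{n_i}}-\sqrt{n_{i,\infty}})^2$ by $(\overline{\sqrt{n_i}}-\sqrt{n_{i,\infty}})^2$ plus the variance of $\sqrt{n_i}$ and is chosen precisely to avoid the unbounded fraction $(\overline{N_i^2}-\overline{N_i}^2)/(\sqrt{\overline{N_i^2}}+\overline{N_i})$ that appears in the Desvillettes--Fellner expansions you propose to follow; second, the parametrization $\overline{N_i}=N_{i,\infty}(1+\mu_i)$ followed by a case analysis over the eleven sign configurations of $(\mu_E,\mu_C,\mu_S,\mu_P)$ permitted by the conservation laws~(\ref{eq:ER:s2:63}). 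That case analysis is the crux: because the enzyme enters both reversible reactions, the two conservation laws couple the $\mu_i$ differently from the reaction $A_1+A_2\rightleftharpoons A_3\rightleftharpoons A_4+A_5$ treated in the works you cite, so the ``by-now-standard scheme'' cannot simply be quoted; each configuration needs its own elementary estimate to produce the explicit constants $C_3$ and $C_4$ of~(\ref{eq:ER1:10})--(\ref{eq:ER1:11}), and through them the explicit rate $C_1$ that the theorem asserts. Without carrying this out you have the right strategy but neither the inequality nor the explicit constants, which are the substance of the statement.
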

\noindent In other words we show $L^1$ convergence of the solution $n_i$, $\isecp$, of~({\ref{eq:ER:s2:10}})-({\ref{eq:ER:s2:12}}) to its respective steady state $n_{i,\infty}$, $\isecp$, at the rate $C_1/2$.

We remark that by following the general theory of the detailed balance systems, e.g., \cite{Fellner-2016a} and references therein, there exists a unique detailed balance equilibrium to the system~({\ref{eq:ER:s2:10}})-({\ref{eq:ER:s2:12}}) satisfying the conservation laws
\begin{equation} \label{eq:ER:s2:14}
\neR + \ncR = M_1, \quad \nsR + \ncR + \npR = M_2,
\end{equation}
and the detailed balance conditions
\begin{equation} \label{eq:ER:s2:15}
k_{-}\ncR = k_{+}\nsR\neR, \quad k_{p+}\ncR = k_{p-}\npR\neR.
\end{equation}
It is easy to show that the unique, strictly positive equilibrium $\mathbf{n}_{\infty} = (\nsR,\neR,\ncR,\npR)$ is then
\begin{equation}\label{eq:ER:s2:18} \begin{aligned}
& \ncR = \dfrac{1}{2}\left(M+K - \sqrt{(M+K)^2-4 M_1M_2} \right), \\ 
\neR  & = M_1 - \ncR, \quad \nsR = \dfrac{k_{-}\ncR}{k_{+}\neR}, \quad \npR = \dfrac{k_{p+}\ncR}{k_{p-}\neR},
\end{aligned} \end{equation}
where $M=M_1+M_2$ and $K = k_{-}/k_{+}+k_{p+}/k_{p-}$.

Theorem~\ref{theorem_ER_02} is proved by means of entropy methods, which are based on an idea to measure the distance between the solution and the stationary state by the (monotone in time) entropy of the system. This entropy method has been developed mainly in the framework of the scalar diffusion equations and the kinetic theory of the spatially homogeneous Boltzmann equation, see \cite{Arnold1, Carrillo-2001, Villani-2003} and references therein. The method has been already used to obtain explicit rates for the exponential decay to equilibrium in the case of reaction-diffusion systems modelling chemical reactions $2 A_1 \rightleftharpoons A_2$, $A_1 + A_2 \rightleftharpoons A_3$, $A_1 + A_2 \rightleftharpoons A_3 + A_4$ and $A_1 + A_2 \rightleftharpoons A_3 \rightleftharpoons A_4 + A_5$ in \cite{DF2, DF3, DF1, Fellner-2016a}. 
The large time behaviour of a solution to a general detailed balance reaction-diffusion system counting $R$ reversible reactions involving $N$ chemicals, 
\begin{equation}\label{eq:ER:r1:01} \alpha^j_1 A_1 + \ldots + \alpha^j_N A_N \rightleftharpoons \beta^j_1 A_1 + \ldots + \beta^j_N A_N \end{equation}
with the nonnegative stoichiometric coefficients $\alpha^j_1, \ldots, \alpha^j_N$, $\beta^j_1, \ldots, \beta^j_N$, for $j = 1, \ldots, R$, was also studied in \cite{Fellner-2016a}. However, the convergence rates could not be explicitly calculated without knowing explicit structure of the mass conservation laws in the general case. 

The present paper extends the application of the proposed entropy method for the reversible enzyme reaction~(\ref{eq:ER:s2:09}) counting two single reversible reactions. 
The difficulty comes from a chemical (enzyme) that appears in both reactions which makes~(\ref{eq:ER:s2:09}) different from the reaction $A_1 + A_2 \rightleftharpoons A_3 \rightleftharpoons A_4 + A_5$ studied in~\cite{Fellner-2016a}, in particular, in the structure of the conservation laws that is essential in the computation of the rates of convergence. Further, even though the convergence rates are obtained through a chain of rather simple but nasty calculations in \cite{DF2, DF3, DF1, Fellner-2016a}, we simplify them by means of an inequality~(\ref{eq:ER2:01}) in Lemma~\ref{lemma_ER_01}. In particular, if we denote $N_i = \sqrt{n_i}$, $N_{i,\infty} = \sqrt{n_{i,\infty}}$ and $\overline{N_i} = \int_{\Omega} N_i(x)\dx$ for some chemical $n_i$ and its equilibrium $n_{i,\infty}$, the expansion used in \cite{DF2, DF3, DF1, Fellner-2016a} (c.f., equation~(2.29) in \cite{Fellner-2016a}) to measure the distance between $\overline{N_i}$ and $N_{i,\infty}$ is of the form
\[\overline{N_i} = N_{i,\infty}(1+\mu_i) - \dfrac{\overline{N^2_i} - \overline{N_i}^2}{\sqrt{\overline{N^2_i}}+\overline{N_i}} \]
for some constant $\mu_i \ge -1$. The fraction in this expansion may become unbounded when $\overline{N^2_i}$ approaches zero, which has to be carefully treated. On the other hand, Lemma~\ref{lemma_ER_01} allows different expansions that consequently lead to easier calculations.

For the sake of completeness, a different approach based on a convexification argument is used in \cite{Mielke-2014} to study the large time behaviour of the reaction-diffusion system for~(\ref{eq:ER:r1:01}). However, it is difficult to derive explicit convergence rates even for a bit more complex chemical reactions such as~(\ref{eq:ER:s2:09}) by using this convexification argument.
First order chemical reaction networks have been recently analysed in \cite{Fellner-2015a}.

The rest of the paper is organised as follows. In Section \ref{sec:entropy} we introduce entropy and entropy dissipation functionals and provide first estimates including $L^2$ and $L^2(\log L)^2$ bounds. A main ingredient for the a-priori estimates is a duality argument that is presented in Appendix. The  large time behaviour of the solution as $t \to \infty$ studied by the entropy method is given in Section~\ref{sec:trend}. 

\section{Entropy, entropy dissipation and a-priori estimates}
\label{sec:entropy}

Let us first mention a simple result on the non-negativity of solutions of~({\ref{eq:ER:s2:10}})-({\ref{eq:ER:s2:12}}) which follows from the so-called quasi-positivity property of the right hand sides of~({\ref{eq:ER:s2:10}}), see~\cite{Pierre1}. 

\begin{lemma} \label{theorem_ER_03}
Let $n_i^0 \ge 0$ in $\Omega$, then $n_i \ge 0$ everywhere in $Q_T$ for each $\isecp$.
\end{lemma}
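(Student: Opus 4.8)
The plan is to verify the \emph{quasi-positivity} of the reaction terms and then invoke the standard invariance principle for reaction-diffusion systems. Writing the right-hand sides of~(\ref{eq:ER:s2:10}) as $\fs,\fe,\fc,\fp$, quasi-positivity means that for each $\isecp$ one has $f_i(\ns,\ne,\nc,\np)\ge 0$ whenever all four arguments are nonnegative and the $i$-th one vanishes. This is immediate here: setting $\ns=0$ gives $\fs=k_-\nc\ge 0$; setting $\ne=0$ gives $\fe=(k_-+k_{p+})\nc\ge 0$; setting $\nc=0$ gives $\fc=k_+\ns\ne+k_{p-}\ne\np\ge 0$; and setting $\np=0$ gives $\fp=k_{p+}\nc\ge 0$. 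In each case the surviving terms are products of nonnegative quantities, so the condition holds. Together with the homogeneous Neumann boundary conditions~(\ref{eq:ER:s2:12}) and $n_i^0\ge 0$, the general invariance result for quasi-positive systems (see~\cite{Pierre1}) then yields $n_i\ge 0$ on $Q_T$ for every $\isecp$.

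For concreteness I would also give the underlying energy argument. Denote by $n_i^-=\max(-n_i,0)\ge 0$ the negative part, so that $n_i^0\ge 0$ forces $n_i^-(0,\cdot)\equiv 0$, and set $\Phi(t)=\tfrac12\sum_{\isecp}\int_\Omega (n_i^-)^2\dx$. Testing the $i$-th equation of~(\ref{eq:ER:s2:10}) with $-n_i^-$ and integrating over $\Omega$, the time derivative produces $\tfrac12\frac{\mathrm d}{\mathrm dt}\int_\Omega (n_i^-)^2\dx$, while the diffusion term, after integration by parts (the boundary integral vanishing by~(\ref{eq:ER:s2:12})), contributes the nonnegative quantity $D_i\int_\Omega|\nabla n_i^-|^2\dx$, which may be discarded. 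One is left with
\[ \frac{\mathrm d}{\mathrm dt}\Phi(t)\ \le\ -\sum_{\isecp}\int_\Omega f_i\, n_i^-\dx. \]
The goal is to bound the right-hand side by $C\,\Phi(t)$ and close the argument with Gronwall's inequality, so that $\Phi(0)=0$ forces $\Phi\equiv 0$, i.e. $n_i\ge 0$ for all $\isecp$.

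The main obstacle is exactly the coupling between the components: at a given point several concentrations may be simultaneously negative, so $f_i$ need not be nonnegative along the flow and the pointwise sign information from quasi-positivity cannot be exploited directly. Grouping $-\sum_i f_i\, n_i^-$ according to the two reversible reactions and expanding each $n_i=n_i^+-n_i^-$, the \emph{favourable} monomials (those in which only one species is negative) carry the correct sign and are harmless; the only contributions of the wrong sign are products involving at least two distinct negative parts, and these are at least cubic in the quantities $n_i^-$. Since the solution is bounded (the $L^\infty$ a-priori estimate recalled in the Introduction), one factor of each such product is controlled by a constant, so the whole expression is dominated by a quadratic form $C\sum_{\isecp}(n_i^-)^2$. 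Integrating gives $-\sum_i\int_\Omega f_i\, n_i^-\dx\le C\,\Phi(t)$, which is precisely the bound needed for Gronwall. Alternatively, the super-quadratic terms can be avoided altogether by first establishing nonnegativity for the truncated nonlinearities $f_i(n_S^+,n_E^+,n_C^+,n_P^+)$, where quasi-positivity applies verbatim, and then identifying the truncated solution with the original one via uniqueness.
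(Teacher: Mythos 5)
Your proposal is correct and follows the same route as the paper, which likewise disposes of this lemma by observing that the right-hand sides of~(\ref{eq:ER:s2:10}) are quasi-positive and invoking the standard invariance result of~\cite{Pierre1}; your explicit verification of quasi-positivity for each $\isecp$ is exactly the computation the paper leaves implicit. One caveat on your supplementary energy argument: controlling the cubic terms such as $k_{+}(n_S^-)^2 n_E^-$ via the $L^\infty$ a-priori bound is circular, since those bounds are obtained in the paper from the entropy and duality estimates, which already presuppose nonnegativity of the $n_i$; the truncation-plus-uniqueness variant you mention at the end avoids this and is the argument that actually underlies the cited result.
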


In the sequel, we will write shortly $\mathbf{n} = (\ns,\ne,\nc,\np)$. The entropy functional $E(\textbf{n}) : [0, \infty)^4 \to [0, \infty)$ and the entropy dissipation $D(\textbf{n}) : [0, \infty)^4 \to [0, \infty)$ are defined, respectively, by
\begin{equation} \label{eq:ER:s2:100}
E(\textbf{n}) = \sum_{i = \{S,E,C,P\}} \int_{\Omega} n_i\log (\sigma_i n_i) - n_i +1/\sigma_i \dx
\end{equation}
and
 \begin{equation} \label{eq:ER:s2:101} \begin{aligned}
D(\textbf{n}) = & \sum_{i = \{S,E,C,P\}} 4 D_i \int_{\Omega} \vert \nabla \sqrt{n_i} \vert^2 \dx\\
& + \int_{\Omega} \left[ \left( k_{+}\ns\ne - k_{-}\nc \right)\left( \log(\sigma_S \sigma_E \ns \ne) - \log(\sigma_C \nc) \right) \right. \\ & \left.+ \left( k_{p-}\ne\np - k_{p+}\nc \right)\left( \log(\sigma_E \sigma_P \ne \np) - \log(\sigma_C \nc) \right) \right] \dx,
\end{aligned}\end{equation}
where $\sigma_S$, $\sigma_E$, $\sigma_C$ and $\sigma_P$ depend on the kinetic rates.
The first integral of the entropy dissipation~(\ref{eq:ER:s2:101}) is known as the relative Fisher information in information theory and as the Dirichlet form  in the theory of large particle systems, since 
\[  4 \int \vert \nabla \sqrt{n_i} \vert^2 = \int \dfrac{\vert \nabla n_i \vert^2}{n_i} = \int n_i \left| \nabla \left( \log n_i \right) \right|^2,\] 
see \cite{Villani-2003}, p. 278.

Note that the function $x\log x-x+1$ is nonnegative and strictly convex on $[0,\infty)$. Thus, the entropy $E(\textbf{n})$ is nonnegative along the solution $\mathbf{n}(t, \cdot)$ for each $t \ge 0$. Also, the entropy dissipation $D(\textbf{n})$ is nonnegative along the solution $\mathbf{n}(t, \cdot)$ for $\alpha, \beta > 0$ such that
\begin{equation} \label{eq:ER:s2:13} \begin{aligned}
\sigma_C = \alpha k_{-}, \quad \sigma_S\sigma_E = \alpha k_{+}, \\
\sigma_C = \beta k_{p+}, \quad \sigma_E\sigma_P = \beta k_{p-}.
\end{aligned}\end{equation}
Indeed, with~(\ref{eq:ER:s2:13}) the last two integrands in~(\ref{eq:ER:s2:101}) have a form of $(x-y)(\log x - \log y)$ which is nonnegative for all $x,y \in \re_{+}$. One can choose $\alpha=1$ and $\beta=k_{-}/k_{p+}$ to obtain
\begin{equation} \label{eq:sig} \sigma_C=\sigma_E=k_{-}, \; \sigma_S=\dfrac{k_{+}}{k_{-}} \; \textnormal{ and } \; \sigma_P=\dfrac{k_{p-}}{k_{p+}},
\end{equation}
though other options are possible.

It is straightforward to verify that $D(\textbf{n}) = - \partial_t E(\textbf{n})$, which implies that $E(\textbf{n})$ is decreasing along the solution $\mathbf{n}(t, \cdot)$ and that there exists a limit of $E(\mathbf{n}(t,\cdot))$ as $t \to \infty$. By integrating this simple relation over $[t_1, \, t_2]$ ($t_2 > t_1 > 0$) we obtain
\[
E(\textbf{n}(t_1,x)) -  E(\textbf{n}(t_2,x)) = \int_{t_1}^{t_2} D(\textbf{n}(s,x)) \textnormal{d}s
\]
which implies that
\begin{equation}\label{eq:ER:s2:d5}
\lim_{t \to \infty}\int_t^{\infty} D(\textbf{n}(s,x)) \textnormal{d}s = 0.
\end{equation}
Hence, if the solution $\textbf{n}(t,x)$ tends to some $\textbf{n}_{\infty}(x)$ as $t \to \infty$, then $D(\textbf{n}_{\infty}(x))=0$ and $\textbf{n}_{\infty}$ is spatially homogeneous due to the Fisher information in~(\ref{eq:ER:s2:101}). In fact, it holds that
\begin{equation}\label{eq:ER:s2:d1} 
D(\textbf{n}(t,x)) = 0 \Longleftrightarrow  \textbf{n}(t,x) = \textbf{n}_{\infty}
\end{equation}
where $\textbf{n}_{\infty}$ is given by~(\ref{eq:ER:s2:14}) and~(\ref{eq:ER:s2:15}). Let us remark that the entropy $E(\textbf{n})$ is ``D-diffusively convex Lyapunov functional" which implies that the diffusion added to systems of ODEs is irrelevant to their long-term dynamics and that there cannot exist other (non-constant) equilibrium to~({\ref{eq:ER:s2:10}})-({\ref{eq:ER:s2:12}}) than~(\ref{eq:ER:s2:18}), \cite{Fitzgibbon-1997}. 

Further, we can write
\begin{equation}\label{eq:ER:s2:d3} 
E(\textbf{n}(t,x)) + \int_0^t D(\textbf{n}(s,x)) \textnormal{d}s = E(\textbf{n}(0,x)).
\end{equation}
Since the entropy and entropy dissipation are both nonnegative we can deduce from~(\ref{eq:ER:s2:d3}) that
\begin{equation}\label{eq:ER:s2:d4a} 
\sup_{t\in[0,\infty)} \Vert n_i \log n_i \Vert_{L^1(\Omega)} \le C,
\end{equation}
i.e., $n_i \in L^{\infty}([0,\infty); L (\log L)(\Omega))$ for each $\isecp$, and 
\begin{equation}\label{eq:ER:s2:d4c} 
\Vert \nabla \sqrt{n_i} \Vert^2_{L^{2}([0,\infty); L^2(\Omega,\re^d))} \le C,
\end{equation}
i.e., $\sqrt{n_i} \in L^{2}([0,\infty); W^{1,2}(\Omega))$ for each $\isecp$.

In addition to the above estimates, let us introduce nonnegative entropy density variables $z_i = n_i \log (\sigma_i n_i) - n_i +1/ \sigma_i$.  Then, the system~({\ref{eq:ER:s2:10}})-({\ref{eq:ER:s2:12}}) becomes
\begin{equation}\label{eq:ER:s4:01}
\dfrac{\partial z}{\partial t} - \Delta (Az) \le 0 \textnormal{\; in \;} \Omega, \quad \nabla (Az) \cdot \nu = 0 \textnormal{\; on \;} \partial \Omega
\end{equation}
where $z = \sum_i z_i$, $z_d = \sum_i D_i z_i$ (sums go through $i\in \{S,E,C,P\}$) and $A = z_d/z \in \left[ \underline{D}, \overline{D} \right]$ for $\underline{D} = \min_{i\in \{S,E,C,P\}}\{ D_i\}$ and $\overline{D} = \max_{i\in \{S,E,C,P\}}\{ D_i\}$.
Indeed, after some algebra we obtain
\begin{equation} \label{eq:ER:s4:02} \begin{aligned}
\dfrac{\partial z}{\partial t} - \Delta (Az) = &- \sum_i D_i \dfrac{\vert \nabla n_i \vert^2}{n_i} \\
& - \left( k_{+}\ns\ne - k_{-}\nc \right)\left( \log(\sigma_S \sigma_E \ns \ne) - \log(\sigma_C \nc) \right) \\ 
& - \left( k_{p-}\ne\np - k_{p+}\nc \right)\left( \log(\sigma_E \sigma_P \ne \np) - \log(\sigma_C \nc) \right)
\end{aligned} \end{equation}
where the \textit{r.h.s.} of~(\ref{eq:ER:s4:02}) is nonpositive for $\sigma_i's$ given by~(\ref{eq:sig}). The boundary condition in~(\ref{eq:ER:s4:01}) can be also easily verified.

Hence, a duality argument developed in \cite{PSch, Pierre1} and reviewed in Appendix 
 implies for each $j \in \{S,E,C,P\}$ that
\begin{equation}\label{eq:ER:s2:47}
\Vert  n_j \log (\sigma_j n_j)-n_j+1/\sigma_j \Vert_{L^2(Q_T)} \le C \left\| \sum_{i} n_i^0 \log (\sigma_i n_i^0)-n_i^0+1/\sigma_i \right\|_{L^2(\Omega)}
\end{equation}
where $C = C(\Omega, \underline{D}, \overline{D}, T)$. 
We deduce from~(\ref{eq:ER:s2:47}) that $n_i\in L^2(\log L)^2(Q_T)$ as soon as $n_i^0 \in L^2(\log L)^2(\Omega)$ for each $\isecp$. Note that a function $v \in L^2(\log L)^2(\Omega)$ is a measurable function such that $\int_{\Omega} v^2 (\log v)^2 \dx$ is finite. 

Moreover, the same duality argument implies $L^2(Q_T)$ bounds by taking into account $n_i^0 \in L^2(\Omega)$ for each $\isecp$ and $z = n_S + n_E + 2n_C + n_P$, $z_d = D_S n_S + D_E n_E + 2D_C n_C + D_P n_P$ and $A = z_d/z$ for which we directly obtain~(\ref{eq:ER:s4:01}). 

\section{Exponential convergence to equilibrium: an entropy method}
\label{sec:trend}

Let us first describe briefly a basic idea of the method. Consider an operator $A$, which can be linear or nonlinear and can involve derivatives or integrals, and an abstract problem 
\[\partial_t \rho = A \rho.\]
Assume that we can find a Lyapunov functional $E := E(\rho)$, usually called the entropy, such that $D(\rho) = -\partial_t E(\rho) \ge 0$ and
\[ D(\rho) \ge \Phi (E(\rho)-E(\rho_{eq})) \tag{EEDI} \label{eq:eedig}\]
along the solution $\rho$ where $\Phi$ is a continuous function strictly increasing from 0 and $\mathbf{\rho_{eq}}$ is a state independent of the time $t$, \cite{Arnold1, Villani-2003}. The aforementioned inequality between the entropy dissipation $D(\rho)$ and the relative entropy $E(\rho)-E(\rho_{eq})$ is known as the entropy-entropy dissipation inequality (EEDI). The EEDI and the Gronwall inequality then imply the convergence in the relative entropy $E(\rho) \to E(\rho_{eq})$ as $t \to \infty$ that can be either exponential if $\Phi(x) = \lambda x$ or polynomial if $\Phi(x) = x^{\alpha}$; in both cases $\lambda$ and $\alpha$ can be found explicitly.
In the second step, the relative entropy $E(\rho) - E(\rho_{eq})$ needs to be bounded from below by the distance $\rho-\rho_{eq}$ in some topology.   

In our reaction-diffusion setting, the relative entropy $E(\mathbf{n} \vert \mathbf{n}_{\infty}) := E(\mathbf{n})-E(\mathbf{n}_{\infty})$ for the entropy functional defined in~(\ref{eq:ER:s2:100}) can be written as
\begin{equation} \label{eq:ER:s2:24} 
 E(\mathbf{n} \vert \mathbf{n}_{\infty}) =  \sum_{i = \{S,E,C,P\}} \int_{\Omega} n_i \log \dfrac{n_i}{n_{i,\infty}} - (n_i - n_{i,\infty}) \dx \ge 0.
\end{equation}
This is a consequence of the conservation laws~(\ref{eq:ER:s2:02}) and~(\ref{eq:ER:s2:03}) which together with~(\ref{eq:ER:s2:14}) and~(\ref{eq:ER:s2:15}) imply
\begin{equation} \label{eq:ER:s2:24b} 
\sum_{i = \{S,E,C,P\}} (\overline{n_i} - n_{i,\infty}) \log (\sigma_i n_{i,\infty}) = 0.
\end{equation}
Note that the relative entropy~(\ref{eq:ER:s2:24}), known also as the Kullback-Leibler divergence, is universal in the sense that it is independent of the reaction rate constants, \cite{Gorban-2010}. The relative entropy~(\ref{eq:ER:s2:24}) can be then estimated from below by using the Czisz\'{a}r-Kullback-Pinsker (CKP) inequality known from information theory that can be stated as follows.

\begin{lemma}[Czisz\'{a}r-Kullback-Pinsker, \cite{Fellner-2016a}]  \label{theorem_ER_05}
Let $\Omega$ be a measurable domain in $\re^d$ and $u, v : \Omega \to \re_{+}$ measurable functions. Then
\begin{equation}\label{eq:ER:s2:21}
\int_{\Omega} u \log \dfrac{u}{v} - (u-v) \dx \ge \dfrac{3}{2 \Vert u \Vert_{L^1(\Omega)} + 4 \Vert v \Vert_{L^1(\Omega)}} \Vert u - v \Vert_{L^1(\Omega)}^2.
\end{equation} 
\end{lemma}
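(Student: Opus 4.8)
The plan is to reduce the integral inequality to a pointwise one and then close the gap with a single application of Cauchy--Schwarz. First I would note that, writing $t = u/v$ on the set where $v>0$, the integrand rescales as
\[
u \log \frac{u}{v} - (u-v) = v\left( t \log t - t + 1 \right),
\]
so the entire problem is controlled by the scalar function $\phi(t) = t\log t - t + 1$. The degenerate points are covered by the usual conventions ($0\log 0 = 0$, and the density set to $+\infty$ where $v=0<u$), under which the claimed bound holds trivially; I may therefore assume $u,v>0$ almost everywhere.

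The heart of the argument is the pointwise inequality
\[
u \log \frac{u}{v} - u + v \ge \frac{3(u-v)^2}{2u + 4v}, \qquad u,v \ge 0,
\]
which, after the substitution $u = tv$, is equivalent to $\phi(t) \ge 3(t-1)^2/(2(t+2))$ for $t>0$. To prove this I would clear the positive denominator and study $h(t) = (t+2)\phi(t) - \tfrac{3}{2}(t-1)^2$. A direct computation gives $h(1)=h'(1)=h''(1)=0$ together with $h'''(t) = 2(t-1)/t^2$; hence $h''$ attains its global minimum value $0$ at $t=1$, so $h'' \ge 0$, $h$ is convex, and the simultaneous vanishing of $h$ and $h'$ at $t=1$ forces $h \ge 0$ on $(0,\infty)$. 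This establishes the pointwise bound.

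Integrating the pointwise inequality over $\Omega$ yields
\[
\int_\Omega u \log \frac{u}{v} - (u-v) \dx \ge 3\int_\Omega \frac{(u-v)^2}{2u+4v} \dx .
\]
Finally I would apply the Cauchy--Schwarz inequality to the splitting $|u-v| = \frac{|u-v|}{\sqrt{2u+4v}}\cdot\sqrt{2u+4v}$, which gives
\[
\Vert u-v \Vert_{L^1(\Omega)}^2 \le \left( \int_\Omega \frac{(u-v)^2}{2u+4v} \dx \right)\left( 2\Vert u\Vert_{L^1(\Omega)} + 4\Vert v\Vert_{L^1(\Omega)} \right),
\]
and rearranging bounds the right-hand side of the previous display from below by $3\Vert u-v\Vert_{L^1(\Omega)}^2/(2\Vert u\Vert_{L^1(\Omega)}+4\Vert v\Vert_{L^1(\Omega)})$, which is exactly the stated estimate.

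The main obstacle is verifying the pointwise inequality with the sharp constant $3$. The elementary route through the third derivative of $h$ is what keeps the convexity argument clean, and pinning down the constant is the only delicate point (the bound is essentially optimal, with the tightest cases occurring near $t=1$ and as $t\to 0$); the rescaling and the Cauchy--Schwarz step are then entirely routine.
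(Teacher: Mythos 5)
The paper does not prove this lemma at all --- it is quoted directly from \cite{Fellner-2016a} --- so there is no in-paper argument to compare against; your proposal supplies a complete, self-contained proof, and it is correct. The two steps check out: for $h(t) = (t+2)(t\log t - t+1) - \tfrac{3}{2}(t-1)^2$ one indeed gets $h'(t) = 2(t+1)\log t - 4(t-1)$, $h''(t) = 2\log t + 2/t - 2$, $h'''(t) = 2(t-1)/t^2$, so $h''\ge h''(1)=0$, and convexity together with $h(1)=h'(1)=0$ gives $h\ge 0$, i.e.\ the pointwise bound $u\log\frac{u}{v}-u+v \ge \frac{3(u-v)^2}{2u+4v}$; the Cauchy--Schwarz step
\begin{equation*}
\Vert u-v\Vert_{L^1(\Omega)}^2 \le \left(\int_{\Omega} \frac{(u-v)^2}{2u+4v}\dx\right)\left(2\Vert u\Vert_{L^1(\Omega)}+4\Vert v\Vert_{L^1(\Omega)}\right)
\end{equation*}
then closes the argument, with the degenerate sets $\{v=0<u\}$, $\{u=0\}$ handled as you say. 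This is in fact the standard proof of this version of the Czisz\'ar--Kullback--Pinsker inequality (essentially the argument in the cited reference), so you have not taken a genuinely different route --- you have filled in the proof the paper delegates to the literature. One small correction to a side remark: the bound is \emph{not} nearly tight as $t\to 0$ (there $\phi(0^+)=1$ while $3(t-1)^2/(2(t+2))\to 3/4$); only the second-order behaviour at $t=1$, where both sides are $\sim (t-1)^2/2$, forces the constant $3$. This inaccuracy has no bearing on the validity of the proof.
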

\noindent Hence, the application of the CKP inequality~(\ref{eq:ER:s2:21}) concludes the second step of the entropy method. 

Let us mention some other tools that will be later recalled in the proof of the first step. 
\begin{lemma}[Logarithmic Sobolev inequality, \cite{DF1}]
Let $\Omega  \in \re^d$ be a bounded domain such that $\vert \Omega \vert \ge 1$. Then,
\begin{equation} \label{eq:ER:s2:20}
\int_{\Omega} u^2 \log u^2 \dx - \left( \int_{\Omega} u^2 \dx \right) \log \left( \int_{\Omega} u^2 \dx \right) \le L \int_{\Omega} \vert \nabla u \vert^2
\end{equation}
that holds for some $L=L(\Omega,d)$ positive, whenever the integrals on both sides of the inequality exist.
\end{lemma}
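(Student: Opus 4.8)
The plan is to prove~(\ref{eq:ER:s2:20}) in two stages: first a \emph{defective} logarithmic Sobolev inequality carrying a spurious zeroth-order term, and then a tightening argument that removes it, leaving only an explicit constant $L(\Omega,d)$. I would begin by noting that both sides of~(\ref{eq:ER:s2:20}) are homogeneous of degree two under the scaling $u \mapsto \lambda u$ (the factors of $\log \lambda^2$ generated by $\lambda$ cancel between $\int_\Omega u^2 \log u^2\dx$ and $(\int_\Omega u^2\dx)\log\int_\Omega u^2\dx$), so I may normalise $\int_\Omega u^2\dx = 1$ and, replacing $u$ by $\vert u\vert$ (which does not change $\vert\nabla u\vert$ a.e.), assume $u \ge 0$. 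Writing $\mathrm{Ent}(u^2) := \int_\Omega u^2 \log u^2\dx - (\int_\Omega u^2\dx)\log(\int_\Omega u^2\dx)$, the left-hand side equals $\mathrm{Ent}(u^2)$. With $\int_\Omega u^2\dx = 1$ the measure $u^2\dx$ is a probability measure, so applying Jensen's inequality to the concave logarithm gives, for any fixed $q>2$,
\[ \int_\Omega u^2 \log u^2 \dx \le \frac{2q}{q-2}\,\log \Vert u \Vert_{L^q(\Omega)}. \]
Choosing $q = 2d/(d-2)$ for $d\ge 3$ (and any finite $q$ for $d\le 2$), the Sobolev embedding $H^1(\Omega)\hookrightarrow L^q(\Omega)$ on the bounded $C^2$ domain yields $\Vert u\Vert_{L^q(\Omega)} \le C_S(\Vert\nabla u\Vert_{L^2(\Omega)}+\Vert u\Vert_{L^2(\Omega)})$; inserting this, using $\log(1+t)\le t$ and Young's inequality to split the linear gradient term, and undoing the normalisation by homogeneity produces the defective inequality
\[ \mathrm{Ent}(u^2) \le A\,\Vert\nabla u\Vert_{L^2(\Omega)}^2 + B\,\Vert u\Vert_{L^2(\Omega)}^2, \]
with $A,B$ depending only on $q$ and $C_S$, hence on $\Omega$ and $d$.

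The second and harder stage is to absorb $B\Vert u\Vert_{L^2(\Omega)}^2$, which is genuinely necessary since the defective estimate is not sharp for functions close to a constant. Here I would pass to the normalised probability measure $\mathrm{d}\mu = \dx/\vert\Omega\vert$: because $\vert\Omega\vert\ge 1$ one checks that the Lebesgue entropy on the left of~(\ref{eq:ER:s2:20}) is bounded by $\vert\Omega\vert$ times the corresponding $\mu$-entropy (the difference being $-\log\vert\Omega\vert\cdot\Vert u\Vert_{L^2(\Omega)}^2 \le 0$), so it suffices to prove the tight bound for $\mu$. I would then split $u = \overline{u} + v$ into its mean $\overline{u}=\int_\Omega u\,\mathrm{d}\mu$ and mean-zero part $v$, and combine two ingredients: the Poincar\'{e} (spectral gap) inequality $\Vert v\Vert_{L^2(\Omega)}^2 \le C_P \Vert\nabla v\Vert_{L^2(\Omega)}^2 = C_P\Vert\nabla u\Vert_{L^2(\Omega)}^2$, valid on connected bounded Lipschitz domains, and a Rothaus-type comparison bounding $\mathrm{Ent}((\overline{u}+v)^2)$ by $\mathrm{Ent}(v^2) + 2\Vert v\Vert_{L^2(\Omega)}^2$. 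Applying the defective inequality to $v$ and invoking Poincar\'{e} then replaces every $\Vert\cdot\Vert_{L^2(\Omega)}^2$ on the right by a multiple of $\Vert\nabla u\Vert_{L^2(\Omega)}^2$, which yields~(\ref{eq:ER:s2:20}) with an explicit $L = L(\Omega,d)$ after unwinding the normalisation by $\vert\Omega\vert$.

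The main obstacle is precisely this tightening step: the Jensen--Sobolev argument only ever delivers the defective form, and converting it to the clean gradient bound requires the Rothaus comparison together with the spectral gap, with careful bookkeeping of how the hypothesis $\vert\Omega\vert\ge 1$ enters the final constant. As an alternative I could instead quote a ready-made logarithmic Sobolev inequality for bounded Lipschitz domains (for instance through Gross's equivalence with hypercontractivity of the Neumann heat semigroup), but the self-contained route sketched above has the advantage of keeping the constant $L=L(\Omega,d)$ explicit, as the statement of the lemma demands.
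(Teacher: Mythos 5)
The paper does not prove this lemma at all: it is quoted verbatim from the reference [DF1] and used as a black box, so there is no in-paper argument to compare yours against. Judged on its own, your two-stage proof is the standard and correct one. The first stage (Jensen applied to the probability measure $u^2\dx$ after the degree-two homogeneity reduction, followed by the Sobolev embedding $H^1(\Omega)\hookrightarrow L^q(\Omega)$ with $q=2d/(d-2)$, $\log(1+t)\le t$ and Young) does yield the defective inequality $\mathrm{Ent}(u^2)\le A\Vert\nabla u\Vert^2_{L^2}+B\Vert u\Vert^2_{L^2}$, and the second stage (Rothaus's comparison $\mathrm{Ent}_{\mu}((\overline{u}+v)^2)\le \mathrm{Ent}_{\mu}(v^2)+2\Vert v\Vert^2_{L^2(\mu)}$ for the mean-zero part $v$, plus the Poincar\'e--Wirtinger inequality $\Vert v\Vert^2_{L^2}\le C_P\Vert\nabla u\Vert^2_{L^2}$) is exactly the classical mechanism by which a defective logarithmic Sobolev inequality plus a spectral gap is tightened. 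Your accounting of where $\vert\Omega\vert\ge 1$ enters is also right: passing to $\textnormal{d}\mu=\dx/\vert\Omega\vert$ produces the extra term $-\log\vert\Omega\vert\,\Vert u\Vert^2_{L^2(\Omega)}$, which has the favourable sign precisely because $\log\vert\Omega\vert\ge 0$. Two small points to make the write-up airtight: (i) the Rothaus comparison is itself a nontrivial lemma and should be proved or cited explicitly (it is not a one-line consequence of convexity); (ii) the Poincar\'e step needs $\Omega$ connected (implicit in ``domain'') and you should state that $\nabla v=\nabla u$ so that both occurrences of $\Vert\cdot\Vert_{L^2}$ on the right collapse onto the same gradient term. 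With those caveats the argument is complete and delivers an explicit $L=L(\Omega,d)$, which is more than the paper itself provides.
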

\begin{lemma}[Poincar\'e-Wirtinger inequality, \cite{Perthame-LN}] 
Let $\Omega  \in \re^d$ be a bounded domain. Then
\begin{equation} \label{eq:PWI}
P(\Omega) \int_{\Omega} \vert u(x) - \overline{u}\vert^2 \le \int_{\Omega} \vert \nabla u \vert^2, \quad \forall u \in H^1(\Omega) 
\end{equation}
where $\overline{u} = \int_{\Omega} u(x) \dx$ and $P(\Omega)$ is the first non-zero eigenvalue of the Laplace operator with a Neumann boundary condition. 
\end{lemma}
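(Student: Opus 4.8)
The plan is to reduce the inequality to the variational (Rayleigh-quotient) characterisation of $P(\Omega) = \lambda_1$, the first non-zero eigenvalue of the Neumann Laplacian, and then to apply that characterisation to the mean-zero shift $u - \overline{u}$. Recall that on a bounded domain $\Omega \subset \re^d$ with sufficiently regular boundary the operator $-\Delta$ with homogeneous Neumann conditions is self-adjoint with compact resolvent, so its spectrum is discrete, $0 = \lambda_0 < \lambda_1 \le \lambda_2 \le \cdots \to \infty$, and the associated eigenfunctions $\{\varphi_k\}_{k\ge 0}$ form an orthonormal basis of $L^2(\Omega)$; the eigenvalue $\lambda_0 = 0$ is simple and its eigenspace consists of the constants.

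First I would record the Courant-Fischer characterisation of the first non-zero eigenvalue,
\begin{equation*}
P(\Omega) = \lambda_1 = \inf \left\{ \frac{\int_{\Omega} |\nabla v|^2 \dx}{\int_{\Omega} |v|^2 \dx} \ : \ v \in H^1(\Omega), \ v \neq 0, \ \int_{\Omega} v \dx = 0 \right\},
\end{equation*}
the constraint $\int_\Omega v \dx = 0$ expressing $L^2$-orthogonality to the zeroth eigenspace of constants. Now, for an arbitrary $u \in H^1(\Omega)$ I set $v := u - \overline{u}$. Since the paper normalises $|\Omega| = 1$ and $\overline{u} = \int_\Omega u \dx$ is a constant, one has $\int_\Omega v \dx = \overline{u} - \overline{u} = 0$ and $\nabla v = \nabla u$. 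Feeding $v$ into the characterisation above yields $P(\Omega) \int_\Omega |u - \overline{u}|^2 \dx \le \int_\Omega |\nabla u|^2 \dx$, which is exactly~(\ref{eq:PWI}); the degenerate case $v \equiv 0$ (i.e.\ $u$ constant) is trivial since both sides vanish.

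The only real content, and the step I expect to be the main obstacle, is justifying the variational characterisation itself: that the infimum is finite, strictly positive, attained, and equal to $\lambda_1$. This rests on the compact Sobolev embedding $H^1(\Omega) \hookrightarrow L^2(\Omega)$ (Rellich-Kondrachov), which is where the regularity of $\partial\Omega$ (here $C^2$, far more than needed) enters. A self-contained route that sidesteps the full spectral decomposition is a compactness-contradiction argument: if no positive Poincaré constant existed, there would be a sequence $v_n \in H^1(\Omega)$ with $\int_\Omega v_n \dx = 0$, $\|v_n\|_{L^2(\Omega)} = 1$ and $\|\nabla v_n\|_{L^2(\Omega)} \to 0$; this sequence is bounded in $H^1(\Omega)$, so by Rellich a subsequence converges strongly in $L^2(\Omega)$ to some $v$ with $\|v\|_{L^2(\Omega)} = 1$ and $\int_\Omega v \dx = 0$, while $\nabla v_n \to 0$ forces $\nabla v = 0$ and hence $v$ constant; the mean-zero constraint then gives $v = 0$, contradicting $\|v\|_{L^2(\Omega)} = 1$. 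This already produces a positive Poincaré constant, and identifying its optimal value with $\lambda_1$ follows from the min-max principle. Since the present paper only \emph{uses} this inequality as a standard tool, in practice I would simply invoke the reference~\cite{Perthame-LN}, but the argument above is what I would reconstruct if a proof were demanded.
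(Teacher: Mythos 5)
Your proposal is correct, and it is consistent with the paper, which offers no proof of this lemma at all and simply cites \cite{Perthame-LN}; the argument you reconstruct (Courant--Fischer characterisation of $\lambda_1$ applied to the mean-zero shift $u-\overline{u}$, with positivity of the infimum secured by a Rellich--Kondrachov compactness-contradiction argument) is the standard one and is complete. The only point worth flagging is the one you already note: the lemma as stated assumes only that $\Omega$ is bounded, whereas the compact embedding $H^1(\Omega)\hookrightarrow L^2(\Omega)$ — and hence the inequality itself — genuinely requires some boundary regularity (an extension or Lipschitz domain suffices, and the $C^2$ boundary assumed elsewhere in the paper is more than enough), together with the normalisation $\vert\Omega\vert=1$ for $\overline{u}=\int_{\Omega}u\dx$ to be the mean.
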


The following lemma is a technical consequence of the Jensen inequality.

\begin{lemma} \label{lemma_ER_01} Let $\Omega \in \re^d$ be such that $\vert \Omega \vert = 1$, $u, v \in L^1(\Omega)$ be nonnegative functions, $\overline{u} = \int_{\Omega} u(x) \dx$ and $\overline{v} = \int_{\Omega} v(x) \dx$. Then 
\begin{equation}  \label{eq:ER2:01}  \begin{aligned}
& \left( \sqrt{\overline{u}} - \sqrt{\overline{v}} \right)^2 \le ( \overline{\sqrt{u}} - \sqrt{\overline{v}} )^2 + \Vert \sqrt{u} -  \overline{\sqrt{u}} \Vert^2_{L^2(\Omega)},
\end{aligned}\end{equation}
where equality occurs for $v \equiv 0$.
\end{lemma}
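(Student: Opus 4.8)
The plan is to remove the square roots via the substitution $w = \sqrt{u}$ and to turn the $L^2$-term into a variance, after which the claim collapses to a trivial scalar inequality. First I would set $m = \overline{\sqrt{u}} = \int_{\Omega} \sqrt{u}\dx$ and note that, because $\vert \Omega \vert = 1$, expanding the square gives the variance identity
\[ \Vert \sqrt{u} - \overline{\sqrt{u}} \Vert_{L^2(\Omega)}^2 = \int_{\Omega} u \dx - \left( \int_{\Omega} \sqrt{u}\dx \right)^2 = \overline{u} - m^2 =: V. \]
Here $V \ge 0$ is exactly Jensen's inequality for the convex map $x \mapsto x^2$ applied to $\sqrt{u}$ (equivalently Cauchy--Schwarz). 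This rewrites $\overline{u} = m^2 + V$, hence $\sqrt{\overline{u}} = \sqrt{m^2 + V}$, with $m \ge 0$ since $u \ge 0$.

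With $a := \sqrt{\overline{v}} \ge 0$, the inequality~(\ref{eq:ER2:01}) becomes the scalar statement
\[ \left( \sqrt{m^2 + V} - a \right)^2 \le (m - a)^2 + V. \]
I would simply expand both sides: the left-hand side is $m^2 + V - 2a\sqrt{m^2+V} + a^2$ and the right-hand side is $m^2 - 2am + a^2 + V$, so after cancelling $m^2$, $a^2$ and $V$ the inequality reduces to
\[ a\,\sqrt{m^2 + V} \ge a\,m. \]
This holds because $a \ge 0$ and $\sqrt{m^2 + V} \ge \sqrt{m^2} = m$ (using $V \ge 0$ and $m \ge 0$), which finishes the argument.

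For the equality statement, the only inequality invoked is $a\bigl( \sqrt{m^2+V} - m \bigr) \ge 0$, so equality forces either $a = 0$ or $V = 0$; since $a = \sqrt{\overline{v}}$ and $v \ge 0$, the case $a = 0$ is precisely $v \equiv 0$, which is the asserted equality case (the alternative $V = 0$ corresponds to $\sqrt{u}$, and hence $u$, being constant a.e.). I do not expect any genuine obstacle here: once the variance identity is recorded, the proof is a single cancellation. The only thing demanding care is the bookkeeping of the two distinct overbars, $\overline{u} = \int_{\Omega} u\dx$ versus $\overline{\sqrt{u}} = \int_{\Omega} \sqrt{u}\dx$, together with the fact that it is precisely the normalisation $\vert \Omega \vert = 1$ that converts the $L^2$-term into the variance $\overline{u} - (\overline{\sqrt{u}})^2$.
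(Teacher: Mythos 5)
Your proof is correct and is essentially the paper's own argument in different notation: both rest on the variance identity $\overline{u}=\overline{\sqrt{u}}^2+\Vert\sqrt{u}-\overline{\sqrt{u}}\Vert_{L^2(\Omega)}^2$ and reduce the claim to the single Jensen-type inequality $\sqrt{\overline{v}}\,\sqrt{\overline{u}}\ge\sqrt{\overline{v}}\;\overline{\sqrt{u}}$ after expanding and cancelling. Nothing further is needed.
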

\begin{proof} 
Let us define an expansion of $\sqrt{u}$ around its spatial average $\overline{\sqrt{u}}$ by $\sqrt{u} = \overline{\sqrt{u}} + \delta_u(x)$ which implies immediately that $\overline{\delta_u} = 0$,
\[\Vert \sqrt{u} -  \overline{\sqrt{u}} \Vert^2_{L^2(\Omega)} = \Vert \delta_u \Vert^2_{L^2(\Omega)} = \overline{\delta_u^2} \quad \text{and} \quad \overline{u} = \overline{\sqrt{u}}^2 + \overline{\delta_u^2}.\]
Then, with the Jensen inequality $\overline{\sqrt{u}} \le \sqrt{\overline{u}}$ we can write
\[  \begin{aligned} 
\left( \sqrt{ \overline{u} } - \sqrt{\overline{v}} \right)^2 &= \overline{u} - 2 \sqrt{\overline{u}} \sqrt{\overline{v}} + \overline{v} \\
&\le \overline{\sqrt{u}}^2 - 2  \overline{\sqrt{u}} \sqrt{\overline{v}} + \overline{v} +\overline{\delta_u^2}\\
&= ( \overline{\sqrt{u}} - \sqrt{\overline{v}} )^2 + \overline{\delta_u^2}
\end{aligned}\]
which concludes the proof.
\end{proof}
\noindent In fact, with the ansatz $\sqrt{v} = \overline{\sqrt{v}} + \delta_v(x)$, we can deduce that
\[  \begin{aligned}
\left( \sqrt{\overline{u}} - \sqrt{\overline{v}} \right)^2 & \le ( \overline{\sqrt{u}} - \overline{\sqrt{v}} )^2 + \Vert \sqrt{u} -  \overline{\sqrt{u}} \Vert^2_{L^2(\Omega)} + \Vert \sqrt{v} -  \overline{\sqrt{v}} \Vert^2_{L^2(\Omega)} \\
& \le \Vert \sqrt{u} - \sqrt{v} \Vert^2_{L^2(\Omega)} + \frac{1}{P(\Omega)} (\Vert \nabla \sqrt{u} \Vert^2_{L^2(\Omega)} + \Vert \nabla \sqrt{v} \Vert^2_{L^2(\Omega)})
\end{aligned}\]
by the Jensen and Poincar\'e-Wirtinger inequalities.

Recall that we assume $\vert \Omega \vert = 1$, $\underline{D} = \min_i \{D_i\}$, $\overline{D} = \max_i \{D_i\}$ and we write shortly $\textbf{n} = (\ns,\ne,\nc,\np)$,  $\textbf{n}_{\infty} = (\nsR,\neR,\ncR,\npR)$ and $\overline{\textbf{n}}(t) = (\nsRO,\neRO,\ncRO,\npRO)$ where $\overline{n_i} = \int_{\Omega} n_i \dx$ for each $\isecp$.
In the summations we will omit $\isecp$ from the notation.

We can finally prove the exponential convergence of the solution $\textbf{n}(t)$ of~({\ref{eq:ER:s2:10}})-({\ref{eq:ER:s2:12}}) to the equilibrium $\textbf{n}_{\infty}$ given by~(\ref{eq:ER:s2:18}).

\begin{proof}{\it (of Theorem~\ref{theorem_ER_02})} We can deduce from~(\ref{eq:ER:s2:d1}) that
\[
D(\textbf{n}) = - \dfrac{\textnormal{d}}{\textnormal{d} t}E(\textbf{n}) = - \dfrac{\textnormal{d}}{\textnormal{d} t} E(\textbf{n} \vert \textbf{n}_{\infty}).
\]
As suggested above, we search for a constant $C_1$ such that
\begin{equation} \label{eq:ER:s2:26}
D(\textbf{n}) \ge  C_1 E(\textbf{n} \vert \textbf{n}_{\infty}),
\end{equation}
since in this case we obtain
\[  \dfrac{\textnormal{d}}{\textnormal{d} t}  E(\textbf{n} \vert \textbf{n}_{\infty}) \le  -C_1   E(\textbf{n} \vert \textbf{n}_{\infty}),
\]
and, by the Gronwall inequality, 
\begin{equation} \label{eq:ER:s2:26b}
E(\textbf{n} \vert \textbf{n}_{\infty})  \le  E(\textbf{n}(0,x) \vert \textbf{n}_{\infty}) e^{ -C_1 t},
\end{equation}
that is the exponential convergence in the relative entropy as $t \to \infty$.
The CKP inequality~(\ref{eq:ER:s2:21}) applied on the \textit{l.h.s.} of~(\ref{eq:ER:s2:26b}) yields
\begin{equation} \begin{aligned} \label{eq:ER:s2:25} 
E(\textbf{n} \vert \textbf{n}_{\infty}) \ge & \dfrac{1}{2M_2}\Vert \ns - \nsR \Vert^2_{L^1(\Omega)} + \dfrac{1}{M_1 + M_2}\Vert \nc - \ncR \Vert^2_{L^1(\Omega)} \\
& + \dfrac{1}{2M_1}\Vert \ne - \neR \Vert^2_{L^1(\Omega)} + \dfrac{1}{2M_2}\Vert \np - \npR \Vert^2_{L^1(\Omega)}
\end{aligned} \end{equation}
due to~(\ref{eq:ER:s2:24}) and the conservation laws~(\ref{eq:ER:s2:02}) and~(\ref{eq:ER:s2:03}).
Thus, with $C_1$ to be found and 
\[ C_2 = E(\textbf{n}(0,x) \vert \textbf{n}_{\infty}) / \min \left \{ 1/2M_1, 1/2M_2, 1/(M_1+M_2) \right \}\] 
we obtain~(\ref{eq:ER:s2:27}).

To show the EEDI~(\ref{eq:ER:s2:26}) let us split the relative entropy so that
\[
 E(\textbf{n} \vert \textbf{n}_{\infty}) = E(\textbf{n} \vert \overline{\textbf{n}}) + E( \overline{\textbf{n}} \vert \textbf{n}_{\infty}),
\]
and estimate both terms separately. For the first term we obtain that
\begin{equation} \label{eq:ER:s2:30}
 E(\textbf{n} \vert \overline{\textbf{n}}) =  \sum_{i} \int_{\Omega} n_i \log n_i \dx  - \overline{n_{i}} \log \overline{n_{i}}  \le  L \sum_{i} \int_{\Omega} \vert \nabla \sqrt{n_i} \vert^2 \dx
\end{equation}
by the logarithmic Sobolev inequality~(\ref{eq:ER:s2:20}). Hence, when compared with the entropy dissipation~(\ref{eq:ER:s2:101}), we conclude that $ D(\mathbf{n}) \ge \overline{C}_1 E(\mathbf{n} \vert \overline{\mathbf{n}})$ for the constant $\overline{C}_1 = 4 \underline{D}/L$. 

For the second term $E( \overline{\mathbf{n}} \vert \mathbf{n}_{\infty})$ we use~(\ref{eq:ER:s2:24b}) and an elementary inequality $x \log x -  x + 1 \le (x-1)^2$, which holds true for $xÊ\ge 0$, to obtain
\begin{equation}  \label{eq:ER:s2:31}  \begin{aligned}
 E( \overline{\mathbf{n}} \vert \mathbf{n}_{\infty}) & =  \sum_{i} \overline{n_i} \log \dfrac{ \overline{n_i}}{n_{i,\infty}} - \overline{n_i} + n_{i,\infty} \\
 & \le\sum_{i} \dfrac{1}{n_{i,\infty}} \left( \overline{n_i} - n_{i,\infty} \right)^2 \\
 &\le C \sum_{i} \left( \sqrt{\overline{n_i}} - \sqrt{n_{i,\infty}} \right)^2 \\
 &\le  C \left( \sum_{i} \left( \overline{\sqrt{n_i}} - \sqrt{n_{i,\infty}} \right)^2 + \sum_{i} \Vert \sqrt{n_i} -  \overline{\sqrt{n_i}} \Vert^2_{L^2(\Omega)} \right)
\end{aligned}\end{equation}
where the last inequality is due to~(\ref{eq:ER2:01}) (for $u=n_i$ and $v =\overline{v} = n_{i,\infty}$) and the constant $C = 2\max_i \{ 1/n_{i,\infty} \} \max\{ 2M_1, 2M_2, M_1+M_2\}$ is deduced from~(\ref{eq:ER:s2:02}) and~(\ref{eq:ER:s2:03}).

On the other hand, the entropy dissipation $D(\mathbf{n})$ given by~(\ref{eq:ER:s2:101}) can be estimated from below by the Poincar\'e-Wirtinger inequality~(\ref{eq:PWI})
and an elementary inequality $(x-y)(\log x - \log y) \ge 4(\sqrt{x} - \sqrt{y})^2$, which holds true for $x, y \in \re_{+}$. We obtain
\begin{equation} \label{eq:ER:s2:28} \begin{aligned}
D(\mathbf{n}) \ge \; & 4 \min \{ P(\Omega) \underline{D}, 1\} {\Big (}  \sum_{i} \Vert \sqrt{n_i} -  \overline{\sqrt{n_i}} \Vert^2_{L^2(\Omega)}  \\ & + \left. \Vert \sqrt{k_{+} \ns \ne} - \sqrt{k_{-} \nc}\Vert^2_{L^2(\Omega)} + \Vert \sqrt{k_{p-} \ne \np} - \sqrt{k_{p+} \nc}\Vert^2_{L^2(\Omega)} \right).
\end{aligned}
\end{equation}
Hence, we can conclude the proof once we find two constants $C_3$ and $C_4$ such that 
\begin{equation} \label{eq:ER:s2:eedi} \begin{aligned}
\sum_{i} & \left( \overline{\sqrt{n_i}} - \sqrt{n_{i,\infty}} \right)^2 + \sum_{i} \Vert \sqrt{n_i} -  \overline{\sqrt{n_i}} \Vert^2_{L^2(\Omega)} 
\le  C_3 \sum \Vert \sqrt{n_i} - \overline{\sqrt{n_i}} \Vert^2_{L^2(\Omega)} \\
& + C_4 \left( \Vert \sqrt{k_{+} \ns \ne} - \sqrt{k_{-} \nc}\Vert^2_{L^2(\Omega)} + \Vert \sqrt{k_{p-} \ne \np} - \sqrt{k_{p+}  \nc}\Vert^2_{L^2(\Omega)} \right),
\end{aligned} \end{equation}
since in this case, by combining~(\ref{eq:ER:s2:31})-(\ref{eq:ER:s2:eedi}), we obtain
\[
\dfrac{1}{C} E(\overline{\textbf{n}}(t) \vert \textbf{n}_{\infty}) \le \dfrac{\max \{C_3, C_4\}}{4 \min \{ P(\Omega) \underline{D}, 1\}} D(\mathbf{n}).
\]
Hence, we can derive a constant $\widetilde{C}_1$ 
such that $D(\mathbf{n}) \ge \widetilde{C}_1 E( \overline{\mathbf{n}} \vert \mathbf{n}_{\infty})$ and thus 
the convergence rate $C_1$ in the EEDI~(\ref{eq:ER:s2:26}), e.g., $C_1 = \min \{\overline{C}_1, \widetilde{C}_1\}/2$. The missing inequality~(\ref{eq:ER:s2:eedi}) is proved in Lemma~\ref{lemma_ER_06}.
\end{proof}

For the sake of simplicity, let us denote $N_i = \sqrt{n_i}$ and $N_{i,\infty} = \sqrt{n_{i,\infty}}$ and thus rewrite~(\ref{eq:ER:s2:eedi}) into the form
\begin{equation} \begin{aligned} \label{eq:ER:s2:48}
& \sum_{i } \left( \overline{N_i} - N_{i, \infty} \right)^2 + \sum_{i} \Vert N_i -  \overline{N_i} \Vert^2_{L^2(\Omega)} \le C_3 \sum_{i} \Vert N_i -  \overline{N_i} \Vert^2_{L^2(\Omega)} \\
+ C_4 & \left( \Vert \sqrt{k_{+}} N_S N_E - \sqrt{k_{-}} N_C \Vert^2_{L^2(\Omega)} + \Vert \sqrt{k_{p-}} N_E N_P - \sqrt{k_{p+}} N_C \Vert^2_{L^2(\Omega)} \right).
\end{aligned} \end{equation}

\begin{lemma} \label{lemma_ER_06} Let $N_i$, $\isecp$, be measurable functions from $\Omega$ to $\re_{+}$ satisfying the conservation laws~(\ref{eq:ER:s2:02}) and~(\ref{eq:ER:s2:03}), i.e.
\begin{equation} \label{eq:ER:s2:63} \overline{N_C^2} + \overline{N_E^2} = M_1 \quad \textnormal{and} \quad \overline{N_S^2} + \overline{N_C^2} + \overline{N_P^2} = M_2,
\end{equation}
and let $n_{i,\infty} = N_{i,\infty}^2$ be defined by~(\ref{eq:ER:s2:14}) and~(\ref{eq:ER:s2:15}). Then, there exist constants $C_3$ and $C_4$, cf.~(\ref{eq:ER1:10}) and~(\ref{eq:ER1:11}), such that~(\ref{eq:ER:s2:48}) is satisfied.
\end{lemma}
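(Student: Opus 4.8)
### Proof strategy for Lemma~\ref{lemma_ER_06}

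The goal is to bound $\sum_i(\overline{N_i}-N_{i,\infty})^2$ by a combination of the ``variance'' terms $\|N_i-\overline{N_i}\|^2_{L^2(\Omega)}$ and the two reaction defects $\|\sqrt{k_+}N_SN_E-\sqrt{k_-}N_C\|^2_{L^2}$ and $\|\sqrt{k_{p-}}N_EN_P-\sqrt{k_{p+}}N_C\|^2_{L^2}$. The plan is to set up a change of variables isolating the conserved quantities from the ``reaction coordinates.'' Writing each $N_i=\overline{N_i}+\delta_i$ with $\overline{\delta_i}=0$, the variance terms are exactly $\overline{\delta_i^2}$, so it suffices to control $\sum_i(\overline{N_i}-N_{i,\infty})^2$ by these $\overline{\delta_i^2}$ plus the reaction defects.

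First I would exploit the detailed-balance structure. Squaring and averaging the conservation laws gives the two linear relations in~(\ref{eq:ER:s2:63}) among $\overline{N_i^2}=\overline{N_i}^2+\overline{\delta_i^2}$, and the equilibrium satisfies the analogous relations~(\ref{eq:ER:s2:14}) exactly. Subtracting, the four deviations $\mu_i:=\overline{N_i}-N_{i,\infty}$ satisfy two linear constraints (up to the small quantities $\overline{\delta_i^2}$), which cuts the effective dimension from four to two. The remaining two directions must be pinned down by the reaction defects. Expanding each defect around equilibrium, e.g. $\sqrt{k_+}\,\overline{N_S}\,\overline{N_E}-\sqrt{k_-}\,\overline{N_C}$, and using the detailed-balance identities~(\ref{eq:ER:s2:15}) (which say these vanish at equilibrium), I would linearize to get expressions of the form $a\,\mu_S+b\,\mu_E+c\,\mu_C$ plus quadratic remainders. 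Replacing the pointwise products $N_SN_E$ by $\overline{N_S}\,\overline{N_E}$ costs a controlled error, handled by Cauchy--Schwarz and the uniform $L^2$ bounds coming from the conservation laws, so that $\|\sqrt{k_+}N_SN_E-\sqrt{k_-}N_C\|^2_{L^2}$ dominates $(\sqrt{k_+}\,\overline{N_S}\,\overline{N_E}-\sqrt{k_-}\,\overline{N_C})^2$ up to the variance terms.

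The crux is then a finite-dimensional coercivity statement: the linear map sending the vector $(\mu_S,\mu_E,\mu_C,\mu_P)$ to (two conservation residuals, two linearized reaction residuals) is invertible, so $\sum_i\mu_i^2$ is bounded by the sum of squared residuals. I would verify invertibility by checking that the only $(\mu_i)$ annihilating all four linear forms is zero --- this is precisely the uniqueness of the detailed-balance equilibrium under the conservation laws, already asserted around~(\ref{eq:ER:s2:18}). The invertibility yields an explicit constant (a function of $k_\pm,k_{p\pm}$ and the equilibrium values), and collecting this with the error estimates produces $C_3$ and $C_4$ as in~(\ref{eq:ER1:10}) and~(\ref{eq:ER1:11}).

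I expect the main obstacle to be the careful bookkeeping of the quadratic remainder and product-average error terms, ensuring every such term is absorbed into either $\sum_i\overline{\delta_i^2}$ (hence into the $C_3$ side) or into the reaction defects (the $C_4$ side), without circularity and with the correct sign. In particular one must confirm that the coefficients in the linearized reaction residuals do not degenerate --- i.e.\ that the presence of the enzyme $N_E$ in \emph{both} reactions, which is exactly what distinguishes this system from $A_1+A_2\rightleftharpoons A_3\rightleftharpoons A_4+A_5$, still leaves the combined $4\times4$ linear system nonsingular. Verifying this nondegeneracy explicitly, rather than abstractly, is what makes the convergence rate computable.
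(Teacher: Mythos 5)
Your overall framing---expand $N_i=\overline{N_i}+\delta_i$, reduce the problem to a finite-dimensional inequality for the deviations of the spatial averages, and use the two conservation laws plus the two reaction defects to pin down all four deviations---matches the paper's setup, and your check that the combined system of constraints is nondegenerate is the right sanity check. But the central step of your argument, namely linearizing the reaction residuals and the conservation laws around equilibrium and invoking invertibility of the resulting $4\times 4$ linear map, only proves the inequality in a neighbourhood of the equilibrium. The constraints coming from~(\ref{eq:ER:s2:63}) are \emph{quadratic} in the deviations $\mu_i$, not linear: writing $\overline{N_i^2}=\overline{N_i}^2+\overline{\delta_i^2}$ and $\overline{N_i}=N_{i,\infty}+\mu_i$ produces terms $2N_{i,\infty}\mu_i+\mu_i^2$, and the reaction residuals contain the products $\mu_S\mu_E$ and $\mu_P\mu_E$. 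These quadratic remainders are of the same order as the quantity $\sum_i\mu_i^2$ you are trying to control whenever the state is far from equilibrium (the $\mu_i$ are only bounded by constants $\mu_{i,\max}$ depending on $M_1,M_2$, not small), and they cannot be absorbed into $\sum_i\overline{\delta_i^2}$---an independent quantity that may vanish---nor, without further argument, into the reaction defects. So as written your proposal establishes a local entropy--entropy dissipation inequality; closing it globally requires either a non-constructive compactness argument (which defeats the purpose of explicit constants) or a genuinely nonlinear treatment of the large-deviation regime, and the proposal does not supply either.

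The paper takes the nonlinear route: it substitutes $\overline{N_i}=N_{i,\infty}(1+\mu_i)$, keeps the full expressions $((1+\mu_S)(1+\mu_E)-(1+\mu_C))^2$ and $((1+\mu_P)(1+\mu_E)-(1+\mu_C))^2$ without linearizing, extracts from the conservation laws sign restrictions and quadratic bounds on the $\mu_i$ (remarks (i)--(v) in the proof), and then runs an exhaustive case analysis over the eleven admissible sign patterns of $(\mu_E,\mu_C,\mu_S,\mu_P)$ using only elementary inequalities valid for all admissible magnitudes; this is where the explicit constants $C_3$ and $C_4$ in~(\ref{eq:ER1:10}) and~(\ref{eq:ER1:11}) come from. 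To salvage your approach you would need to supplement the linearization with a quantitative lower bound on the right-hand side of~(\ref{eq:ER:s2:48}) away from equilibrium, which is essentially what that case analysis provides.
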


\begin{proof}
Let us use the expansion of $N_i$ around the spatial average $\overline{N_i}$ from Lemma~\ref{lemma_ER_01},   
\begin{equation}  \label{eq:ER:s2:49}
N_i = \overline{N_i} + \delta_i(x), \quad \overline{\delta}_i = 0, \quad \isecp,
\end{equation}
which implies $\overline{N_i^2} = \overline{N_i}^2 + \overline{\delta_i^2}$ for each $\isecp$ and 
\begin{equation}  \label{eq:ER:s2:51}
\sum_{i} \Vert N_i -  \overline{N_i} \Vert^2_{L^2(\Omega)} = \sum_{i} \overline{\delta_i^2}.
\end{equation}
With~(\ref{eq:ER:s2:49}) at hand, we can expand the remaining terms in~(\ref{eq:ER:s2:48}), e.g.,
\begin{equation}  \begin{aligned} \label{eq:ER:s2:52}
\Vert \sqrt{k_{+}} N_S N_E - \sqrt{k_{-}} N_C \Vert^2_{L^2(\Omega)} = & \left( \sqrt{k_{+}} \overline{N_S} \overline{N_E} - \sqrt{k_{-}} \overline{N_C}\right)^2\\
& + 2\sqrt{k_{+}}\left( \sqrt{k_{+}} \overline{N_S} \overline{N_E} - \sqrt{k_{-}} \overline{N_C}\right)  \overline{\delta_S\delta_E}\\
& + \Vert \sqrt{k_{+}}\left( \overline{N_S} \delta_E + \overline{N_E} \delta_S + \delta_S \delta_E \right) - \sqrt{k_{-}}\delta_C \Vert^2_{L^2(\Omega)}\\
\ge & \left( \sqrt{k_{+}} \overline{N_S} \overline{N_E} - \sqrt{k_{-}} \overline{N_C}\right)^2 - \sqrt{k_{+}} K_1 \sum_{i} \overline{\delta^2_i},
\end{aligned}\end{equation}
since the third term in~(\ref{eq:ER:s2:52}) is nonnegative and the second term can be estimated as follows,
\[  \begin{aligned}
2\left( \sqrt{k_{+}} \overline{N_S} \overline{N_E} - \sqrt{k_{-}} \overline{N_C}\right)  \overline{\delta_S\delta_E} &\ge -2 \left| \sqrt{k_{+}} \overline{N_S} \overline{N_E} - \sqrt{k_{-}} \overline{N_C} \right| \int_{\Omega}\delta_S\delta_E\dx \\
& \ge - K_1 ( \overline{\delta^2_S}+\overline{\delta^2_E}) \ge - K_1 \sum_{i} \overline{\delta^2_i},
\end{aligned}\]
where $K_1 = \sqrt{k_{+} M_1 M_2} + \sqrt{k_{-}(M_1 + M_2)/2}$ is deduced from the Jensen inequality $\overline{N_i^2} \ge \overline{N_i}^2$ and~(\ref{eq:ER:s2:63}). 
Analogously, we deduce for $K_2 = \sqrt{k_{p-} M_1 M_2} + \sqrt{k_{p+}(M_1 + M_2)/2}$ that
\begin{equation} \label{eq:ER:s2:55}
\Vert \sqrt{k_{p-}} N_P N_E - \sqrt{k_{p+}} N_C \Vert^2_{L^2(\Omega)} \ge  \left( \sqrt{k_{p-}} \overline{N_P} \overline{N_E} - \sqrt{k_{p+}} \overline{N_C}\right)^2 - \sqrt{k_{p-}} K_2 \sum_{i} \overline{\delta^2_i}.\end{equation}


\noindent We see that with~(\ref{eq:ER:s2:51})--(\ref{eq:ER:s2:55}) it is sufficient to find $C_3$ and $C_4$ such that 
\begin{equation} \label{eq:eedi2} \begin{aligned}
\sum_{i} ( \overline{N_i} & - N_{i, \infty})^2 + \sum_{i} \overline{\delta_i^2} \le  \left( C_3 - C_4 (\sqrt{k_{+}} K_1 + \sqrt{k_{p-}} K_2) \right) \sum_{i} \overline{\delta_i^2} \\
& + C_4 \left( \left( \sqrt{k_{+}} \overline{N_S} \overline{N_E} - \sqrt{k_{-}} \overline{N_C}\right)^2 + \left( \sqrt{k_{p-}} \overline{N_P} \overline{N_E} - \sqrt{k_{p+}} \overline{N_C}\right)^2 \right)
\end{aligned}\end{equation}
from which~(\ref{eq:ER:s2:48}) (and so~(\ref{eq:ER:s2:eedi})) directly follows.


Let us explore how far the spatial average $\overline{N_i}$ can be from the equilibrium state $N_{i,\infty}$ for each $\isecp$, i.e., let us consider a substitution
\begin{equation}\label{eq:ER1:01} \overline{N_i} = N_{i,\infty} (1+\mu_i) \end{equation}
for some $\mu_i \ge -1$, $\isecp$.  We obtain
\begin{equation}\label{eq:ER1:02}
\sum_{i} \left( \overline{N_i} - N_{i, \infty} \right)^2 = \sum_{i} \NiI^2 \mu_i^2
\end{equation}
and with~(\ref{eq:ER:s2:15}), i.e., $\sqrt{k_{+}} \NSI \NEI = \sqrt{k_{-}} \NCI$ and $\sqrt{k_{p-}} \NPI \NEI = \sqrt{k_{p+}} \NCI$,
\begin{equation}\label{eq:ER1:03} \begin{aligned}
\left( \sqrt{k_{+}} \overline{N_S} \overline{N_E} - \sqrt{k_{-}} \overline{N_C}\right)^2 = k_{-} \NCI^2 ((1+\mu_S)(1+ \mu_E) - (1+\mu_C))^2, \\
\left( \sqrt{k_{p-}} \overline{N_P} \overline{N_E} - \sqrt{k_{p+}} \overline{N_C}\right)^2 = k_{p+} \NCI^2 ((1+\mu_P)(1+ \mu_E) - (1+ \mu_C))^2.
\end{aligned} \end{equation}
Hence,~(\ref{eq:eedi2}) follows from
\begin{equation} \label{eq:eedi3} \begin{aligned}
\sum_{i} & \NiI^2 \mu_i^2  + \sum_{i} \overline{\delta_i^2} \le  \left( C_3 - C_4 (\sqrt{k_{+}} K_1 + \sqrt{k_{p-}} K_2) \right) \sum_{i} \overline{\delta_i^2} \\
& + C_4 K_3 ( \underbrace{((1+\mu_S)(1+ \mu_E) - (1+\mu_C))^2}_{{\displaystyle = I_1}} + \underbrace{((1+\mu_P)(1+ \mu_E) - (1+\mu_C))^2}_{{\displaystyle = I_2}})
\end{aligned}\end{equation}
where $K_3 = \min\{\sqrt{k_{-}}, \sqrt{k_{p+}}\}\NCI^2$.

Let us note that the conservation law~(\ref{eq:ER:s2:63}), reflecting the ansatz~(\ref{eq:ER:s2:49}) and the substitution~(\ref{eq:ER1:01}), i.e., 
\begin{eqnarray}
&\NEI^2 + \NCI^2 = \NEI^2(1+\mu_E)^2 + \delEO + \NCI^2(1+\mu_C)^2 + \delCO, \label{eq:ER1:s1} \\
& \begin{aligned} \NSI^2 + \NCI^2 + \NPI^2 = \NSI^2(1+\mu_S)^2 + \delSO & + \NCI^2(1+\mu_C)^2 + \delCO  \\ & + \NPI^2(1+\mu_P)^2 + \delPO, \label{eq:ER1:s2} \end{aligned}
\end{eqnarray}
possesses some restrictions on the signs of $\mu_i$'s. In particular, we remark that
\begin{itemize}
\item[i)] $\forall \isecp$, $-1 \le \mu_i \le \mu_{i,max}$ where $\mu_{i,max}$ depends on $\mathbf{n_{\infty}}$; 
\item[ii)] the conservation law~(\ref{eq:ER1:s1}) excludes the case when $\mu_E >0$ and $\mu_C >0$, since in this case $\NEO > \NEI$ and  $\NCO > \NCI$ and we deduce from~(\ref{eq:ER:s2:63}),~(\ref{eq:ER1:s1}) and the Jensen inequality $ \overline{N_i^2} \ge \overline{N_i}^2$, that
\[M_1 =  \overline{N_E^2} + \overline{N_C^2} \ge \NEO^2 + \NCO^2 > \NEI^2 + \NCI^2 = M_1,\]
which is a contradiction;
\item[iii)] analogously, the conservation law~(\ref{eq:ER1:s2}) excludes the case when $\mu_S >0$, $\mu_C >0$ and $\mu_P >0$;
\item[iv)] for $-1 \le \mu_E, \mu_C \le 0$, the conservation law~(\ref{eq:ER1:s1}) implies $\NEI^2 \mu_E^2 + \NCI^2 \mu_C^2 \le \sum \deliO$, since for $-1 \le s \le 0$ we have $-1 \le s \le -s^2 \le 0$ and we can deduce from~(\ref{eq:ER1:s1}) that
\[ \begin{aligned} 0 = \NEI^2(2\mu_E + \mu_E^2) + \NCI^2(2\mu_C & + \mu_C^2) + \delCO + \delEO \\
&\le - \NEI^2 \mu_E^2 - \NCI^2 \mu_C^2 + \sum \deliO;
\end{aligned}\]
\item[v)] analogously, for $-1 \le \mu_S, \mu_C, \mu_P \le 0$, the conservation law~(\ref{eq:ER1:s2}) implies that $\NSI^2 \mu_S^2 + \NCI^2 \mu_C^2 + \NPI^2 \mu_P^2 \le \sum \deliO$.
\end{itemize}

To find $C_3$ and $C_4$ explicitly, we have to consider all possible configurations of $\mu_i$'s in~(\ref{eq:eedi3}), that is all possible quadruples $(\mu_E, \mu_C, \mu_S, \mu_P)$ depending on their signs. The remarks~(ii) and~(iii) reduce the total number of quadruples by five and the remaining 11 quadruples are shown in Table~\ref{tab:ER1:1}.\\

\renewcommand{\arraystretch}{1.4}
\begin{table}[h]
\centering
\caption[Relations between $\mu_i$ for $\isecp$]{\label{tab:ER1:1} Eleven quadruples of possible relations among $\mu_i$, $\isecp$, which are allowed by the conservation laws~(\ref{eq:ER1:s1}) and~(\ref{eq:ER1:s2}). In the table $``+"$ means that $\mu_i > 0$ and $``-"$ that $-1 \le \mu_i \le 0$. Each quadruple is denoted by a Roman numeral from I to XI.}
\label{tab:ER1:1}
\begin{tabular}{c|c|c|c|c|c|c|c|c|c|c|c}
$\mu_E$ & \multicolumn{4}{c|}{$-$} & \multicolumn{4}{c|}{$+$} & \multicolumn{3}{c}{$-$} \\ \hline
$\mu_C$ & \multicolumn{4}{c|}{$-$} & \multicolumn{4}{c|}{$-$} & \multicolumn{3}{c}{$+$}\\ \hline
$\mu_S$ & $-$ & $-$     & $+$      & $+$    & $-$     & $-$     & $+$  & $+$ & $-$     & $-$      & $+$  \\ \hline
$\mu_P$ & $-$ & $+$     & $-$      & $+$    & $-$     & $+$     & $-$   & $+$ & $-$   & $+$     & $-$   \\ \hline
  & (I) & (II)   & (III)  & (IV)  & (V)  & (VI)  & (VII) & (VIII) & (IX)  & (X) & (XI) 
\end{tabular}
\end{table}
\renewcommand{\arraystretch}{1}

Ad (I). The remarks (iv) and (v) implies $\sum_i \NiI^2 \mu_i^2 \le 2 \sum_i \deliO$ and, therefore,~(\ref{eq:eedi3}) is satisfied for $C_3=3$  and $C_4=0$.\\

Ad (II) and (III). We prove~(\ref{eq:eedi3}) for $-1 \le \mu_E, \mu_C \le 0$ and $\mu_S$ and $\mu_P$ having opposite signs. Firstly, let us remark that~(\ref{eq:ER1:s1}) implies that
\[ \NEI^2 = \NEI^2(1+\mu_E)^2 + \NCI^2(2\mu_C + \mu_C^2) + \delEO + \delCO,\]
i.e.,
\[\begin{aligned} (1+\mu_E)^2 & = 1 - \frac{\NCI^2}{\NEI^2}(2\mu_C + \mu_C^2) - \frac{1}{\NEI^2} (\delEO + \delCO) \\
& \ge 1 - \frac{1}{\NEI^2}  \sum \deliO \end{aligned},\]
since for $-1 \le \mu_C \le 0$ there is $-1 \le 2\mu_C + \mu_C^2 \le 0$. Then, by using an elementary inequality $a^2+b^2 \ge (a-b)^2/2$ we obtain for
\[I_1 + I_2 = ((1+\mu_S)(1+ \mu_E) - (1+\mu_C))^2 + ((1+\mu_P)(1+ \mu_E) - (1+ \mu_C))^2\]
that
\begin{equation} \label{eq:ER1:12} \begin{aligned} I_1 + I_2 & \ge \frac{1}{2}(\mu_S-\mu_P)^2(1+\mu_E)^2 \\
& \ge  \frac{1}{2} K_4 (\NSI^2 \mu_S^2 + \NPI^2 \mu_P^2) - K_5 \sum \deliO
\end{aligned} \end{equation}
since $\mu_S$ and $\mu_P$ have opposite signs and are bounded above by $\mu_{S,max}$ and $\mu_{P,max}$ (by the remark~(i)). In~(\ref{eq:ER1:12}), $K_4 = \min \left\{1/\NSI^2, 1/\NPI^2 \right\}$ is sufficient, nevertheless, we will take 
\begin{equation}\label{eq:ER1:09} K_4 = \min_{\isecp} \left\{\dfrac{1}{\NiI^2} \right\} \textnormal{ and } K_5 =  \dfrac{1}{\NEI^2} (\mu_{S,max}^2 + \mu_{P,max}^2),\end{equation}
since $K_4$ in~(\ref{eq:ER1:09}) will appear several times elsewhere. Together with $\NEI^2 \mu_E^2 + \NCI^2 \mu_C^2 \le \sum \deliO$ for $-1 \le \mu_E, \mu_C \le 0$ known from the remark~(iv), we deduce
\begin{equation}\label{eq:ER:t1} \sum \NiI^2 \mu_i^2 + \sum \deliO \le 2 \left(1 + \frac{K_5}{K_4}\right) \sum \deliO + \frac{2}{K_4} (I_1 + I_2), \end{equation}
and we see that~(\ref{eq:eedi3}) is satisfied for
\[C_4 = \frac{2}{K_3 K_4} \quad \textnormal{and} \quad C_3 = 2\left(1 + \frac{K_5}{K_4}\right) + C_4 \left( \sqrt{k_{+}} K_1 + \sqrt{k_{p-}} K_2 \right), \]
when~(\ref{eq:eedi3}) is compared with the \textit{r.h.s.} of~(\ref{eq:ER:t1}).\\
 
Ad (IV) Assume $-1 \le \mu_E, \mu_C \le 0$ and $\mu_S, \mu_P > 0$.  A combination of~(\ref{eq:ER1:s1}) and~(\ref{eq:ER1:s2}) gives
\begin{equation}\label{eq:ER3:04} \begin{aligned} \NEI^2 - \NSI^2 - \NPI^2 &=  \NEO^2 + \delEO - \NSO^2 - \delSO - \NPO^2 - \delPO \\
& \le \NEI^2 - \NSO^2 - \NPO^2 + \delEO - \delSO - \delPO, 
\end{aligned}\end{equation}
since $\NEO^2 \le \NEI^2$ for $-1 \le \mu_E \le 0$. We deduce from~(\ref{eq:ER3:04}) that
\[- \NSI^2 - \NPI^2 \le  - \NSI^2(1+\mu_S)^2 - \NPI^2(1+\mu_P)^2 + \delEO - \delSO - \delPO \]
and
\[\NSI^2(2\mu_S + \mu_S^2) + \NPI^2(2\mu_P + \mu_P^2) \le \delEO - \delSO - \delPO  \le \sum \deliO. \]
Thus, $\NSI^2 \mu_S^2 + \NPI^2 \mu_P^2 \le \sum \deliO$ since $\mu_S, \mu_P > 0$.
This estimate together with the remark~(iv) yields $\sum \NiI^2 \mu_i^2 \le  2\sum \deliO$. Similarly as in the case (I),~(\ref{eq:eedi3}) is satisfied for $C_3=3$  and $C_4=0$.
\\

Ad (V) Let us now consider the case when $-1 \le \mu_S, \mu_C, \mu_P \le 0$ and $\mu_E>0$. As in the case (IV), a combination of~(\ref{eq:ER1:s1}) and~(\ref{eq:ER1:s2}) gives
\begin{equation}\label{eq:ER3:03} \begin{aligned} \NSI^2 + \NPI^2  - \NEI^2 &= \NSO^2 + \delSO + \NPO^2 + \delPO -  \NEO^2 - \delEO \\
& \le \NSI^2 + \NPI^2 -  \NEO^2 + \delSO + \delPO - \delEO, \end{aligned}\end{equation}
since, again, $\NiO^2 \le \NiI^2$ for $-1 \le \mu_i \le 0$. Hence, for $\mu_E>0$ we deduce from~(\ref{eq:ER3:03}) that $ \NEI^2 \mu_E^2 <  \sum \deliO$, which with the remark~(v) gives $\sum \NiI^2 \mu_i^2 <  2\sum \deliO$. Thus,~(\ref{eq:eedi3}) is satisfied for $C_3=3$  and $C_4=0$.\\

Ad (VI) and (VII). Assume that $\mu_E > 0$, $-1 \le \mu_C \le 0$ and $\mu_S$ and $\mu_P$ have opposite signs. Then using an elementary inequality $a^2+b^2 \ge (a+b)^2/2$ we obtain
\[I_1+I_2 \ge \dfrac{1}{2}(\mu_S-\mu_P)^2(1+\mu_E)^2 > \dfrac{1}{2}(\mu_S-\mu_P)^2 \ge  \dfrac{1}{2}(\mu_S^2 + \mu_P^2),\]
since $(1+\mu_E)^2>1$ and $\mu_S$ and $\mu_P$ have opposite signs. Further, it holds that $(1+\mu_k)(1+\mu_E) > (1+\mu_E)$ for $\mu_k$ being either $\mu_S>0$ or $\mu_P>0$ (one of them is positive). This implies $(1+\mu_k)(1+\mu_E) - (1+\mu_C) > \mu_E - \mu_C > 0$
and thus ($\mu_E$ and $\mu_C$ have opposite signs)
\[I_1 + I_2 >  (\mu_E-\mu_C)^2 \ge \mu_E^2 + \mu_C^2.\]
Altogether, we obtain for both cases that $I_1 + I_2 > \sum \mu_i^2/4 \ge K_4/4 \sum \NiI^2 \mu_i^2$ where $K_4$ is defined in~(\ref{eq:ER1:09}). 
We deduce that~(\ref{eq:eedi3}) is satisfied for 
\begin{equation} \label{eq:es1} C_4 = \frac{4}{K_3 K_4} \quad \textnormal{and} \quad C_3 = 1 + C_4 \left( \sqrt{k_{+}} K_1 + \sqrt{k_{p-}} K_2\right). \end{equation}

Ad (VIII). Assume that $\mu_E, \mu_S, \mu_P > 0$ and $-1 \le \mu_C \le 0$. Using the similar arguments as in the previous case, in particular, $(1+\mu_S)(1+\mu_E) > (1+\mu_S)$, $(1+\mu_S)(1+\mu_E) > (1+\mu_E)$, $(1+\mu_P)(1+\mu_E) > (1+\mu_P)$ and $(1+\mu_P)(1+\mu_E) > (1+\mu_E)$ and since $\mu_i-\mu_C > 0$ for each $i \in \{S,E,P\}$,  we can write
\[\begin{aligned}I_1 + I_2 &= ((1+\mu_S)(1+ \mu_E)-(1+\mu_C))^2 + ((1+\mu_P)(1+ \mu_E)-(1+\mu_C))^2 \\
&\ge \dfrac{1}{2}(\mu_S-\mu_C)^2 + (\mu_E-\mu_C)^2 + \dfrac{1}{2}(\mu_P-\mu_C)^2 \ge \dfrac{1}{2} \sum \mu_i^2 \ge \dfrac{K_4}{2} \sum \NiI^2 \mu_i^2.
 \end{aligned}\]
Hence,~(\ref{eq:eedi3}) is satisfied for $C_4 = 2/K_3K_4$ and $C_3$ defined in~(\ref{eq:es1}).
\\

Ad (IX). The case when $-1 \le \mu_E, \mu_S, \mu_P \le 0$ and $\mu_C > 0$ is similar to the case (VIII). Now we observe that $\mu_C-\mu_i > 0$ for each $i \in \{S,E,P\}$ and that $(1+\mu_S)(1+\mu_E) \le (1+\mu_S)$, $(1+\mu_S)(1+\mu_E) \le (1+\mu_E)$, $(1+\mu_P)(1+\mu_E) \le (1+\mu_P)$ and $(1+\mu_P)(1+\mu_E) \le (1+\mu_E)$ which can be used to conclude $I_1 + I_2 \ge \sum \mu_i^2/2\ge K_4/2 \sum \NiI^2 \mu_i^2$. 
The constants $C_3$ and $C_4$ are the same as in the case (VIII).
\\

Ad (X). Assume that $-1 \le \mu_E \le 0$, $\mu_C > 0$, $-1 \le \mu_S \le 0$ and $\mu_P > 0$. By the same argument as in~(IX), we can write
\begin{equation} \label{eq:ER1:05} I_1 + I_2 \ge I_1 \ge (\mu_C-\mu_E)^2 \ge \mu_C^2 + \mu_E^2.\end{equation}
Using the same elementary inequality as in (II) and (VI), we obtain
\begin{equation} \label{eq:ER1:06} I_1 + I_2 \ge \dfrac{1}{2}(\mu_S-\mu_P)^2(1+\mu_E)^2, \end{equation}
where $-1 \le \mu_E \le 0$, thus we cannot proceed in the way as in the cases~(VI) and (VII) nor in the cases~(II) and (III), since $\mu_C$ is positive now. Nevertheless, we distinguish two subcases when $-1 < \eta \le \mu_E \le 0$ and $-1 \le \mu_E < \eta$. For example, $\eta = -1/2$ works well, however, a more suitable constant $\eta$ could be possibly found. For $\eta = -1/2$ and $\eta \le \mu_E \le 0$ we obtain from~(\ref{eq:ER1:06}) that 
\begin{equation} \label{eq:ER1:07} I_1 + I_2 \ge \dfrac{1}{8}(\mu_S-\mu_P)^2 \ge \dfrac{1}{8}(\mu_S^2+\mu_P^2). \end{equation}
This with~(\ref{eq:ER1:05}) implies that $I_1 + I_2 \ge \sum \mu_i^2/16 \ge K_4/16 \sum \NiI^2 \mu_i^2$ and
we conclude that~(\ref{eq:eedi3}) is satisfied for $C_4 = 16/K_3K_4$ and $C_3$ defined in~(\ref{eq:es1}).

For $\eta = -1/2$ and $-1 \le \mu_E < \eta$ we obtain, by using an elementary inequality $(a-b)^2 \ge a^2/2-b^2$, that
\begin{equation} \label{eq:ER1:08} \begin{aligned} I_1 + I_2 \ge I_1 & = ((1+\mu_C) - (1+\mu_S)(1+\mu_E))^2 \\
& \ge \dfrac{1}{2} (1+\mu_C)^2 - (1+\mu_S)^2(1+\mu_E)^2 > \dfrac{1}{4}, \end{aligned} \end{equation}
since $(1+\mu_C)^2 > 1$ for $\mu_C > 0$ and $(1+\mu_S)^2(1+\mu_E)^2 < 1/4$ for $-1 \le \mu_S \le 0$ and $-1 \le \mu_E < -1/2$. On the other hand, $\sum \NiI^2 \mu_i^2 \le \sum \NiI^2 \mu_{i,\max}^2$ by the remark~(i). In fact, for the given quadruple of $\mu_i$'s, we deduce from~(\ref{eq:ER1:s2}) a constant $K_6 = \NSI^2 (1 + \NPI^2 + \NCI^2) + \NEI^2$ such that $\sum \NiI^2 \mu_i^2 \le K_6$. We see that~(\ref{eq:eedi3}) is satisfied for $C_4 = K_6/4K_3$ and $C_3$ as in~(\ref{eq:es1}).
\\

Ad (XI). Finally, assume that $-1 \le \mu_E \le 0$, $\mu_C > 0$, $\mu_S > 0$ and $-1 \le \mu_P \le 0$. This case is symmetric to the previous case (X), thus the same procedure can be applied again (it is sufficient to exchange superscripts $S$ and $P$ everywhere they appear in (X)) to deduce the constants $C_3$ and $C_4$ in~(\ref{eq:eedi3}). In particular, for $ -1/2 \le \mu_E \le 0$ we take $C_4 = 16/K_3K_4$ and for $ -1 \le \mu_E < -1/2 $ we take $C_4 = K_7/4K_3$ and $K_7 = \NPI^2 (1 + \NSI^2 + \NCI^2) + \NEI^2$. In both subcases $C_3$ is as in~(\ref{eq:es1}).\\

From the eleven cases (I)-(XI), we need to take
\begin{equation} \label{eq:ER1:10} C_4 = \frac{1}{K_3} \max \left\{\frac{16}{K_4}, \frac{K_6}{4}, \frac{K_7}{4} \right\}\end{equation}
and
\begin{equation} \label{eq:ER1:11}C_3 = \max \left\{3, 2\left(1 + \frac{K_5}{K_4}\right)  \right\} + C_4 \left( \sqrt{k_{+}} K_1 + \sqrt{k_{p-}} K_2 \right) \end{equation}
to find~(\ref{eq:eedi3}) true and thus to conclude the proof.
\end{proof}




\section*{Appendix. A duality principle}
\addcontentsline{toc}{section}{Appendix. A duality principle}

We recall a duality principle \cite{Pierre1, PSch} which is used to show $L^2(\log L)^2$ and $L^2$ bounds, respectively, for the solution to~(\ref{eq:ER:s2:10})-(\ref{eq:ER:s2:12}). Note that a more general result is proved in \cite{Pierre1}, Chap. 6, than presented here.

\begin{lemma}[Duality principle] 
\label{lemma_ER_03} 
Let $0<T<\infty$ and $\Omega$ be a bounded, open and regular (e.g., $C^2$) subset of $\re^d$. Consider a nonnegative weak 
solution $u$ of the problem
\begin{equation}\label{eq:ER:s2:40}
\left\{ \begin{aligned} & \partial_t u - \Delta (A u) \le 0, \\
& \nabla (A u) \cdot \nu = 0, \quad \forall t \in I, \; x \in \partial \Omega,  \\
& u(0,x) = u_0(x), \\
\end{aligned} \right.
\end{equation}
where we assume that $0 < A_1 \le A = A(t,x) \le A_2 < \infty $ is smooth, $A_1$ and $A_2$ are strictly positive constants, $u_0 \in L^2(\Omega)$ and $\int u_0 \ge 0$. Then,
\begin{equation}\label{eq:ER:s2:41}
\Vert u \Vert_{L^2(Q_T)} \le C \Vert u_0 \Vert_{L^2(\Omega)}
\end{equation}
where $C=C(\Omega, A_1, A_2, T)$.
\end{lemma}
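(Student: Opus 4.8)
The plan is to prove the bound by a duality argument: rather than estimating $u$ directly, I pair the differential inequality with the solution of a suitably chosen backward (dual) problem and read off the $L^2$ estimate. Fix an arbitrary nonnegative $\theta \in L^2(Q_T)$, which by density I may assume smooth, and let $w$ solve the terminal-value problem
\begin{equation*}
-\partial_t w - A\Delta w = \theta \ \text{ in } Q_T, \quad \nabla w \cdot \nu = 0 \ \text{ on } \partial\Omega, \quad w(T,\cdot) = 0.
\end{equation*}
This problem is uniformly parabolic since $A_1 \le A \le A_2$ with $A$ smooth, so a unique smooth solution exists; reversing time turns it into a forward heat-type equation with nonnegative source and zero initial datum, whence the maximum principle gives $w \ge 0$. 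Testing the inequality $\partial_t u - \Delta(Au) \le 0$ against $w \ge 0$ and integrating by parts in $Q_T$ — the two Neumann conditions $\nabla(Au)\cdot\nu = 0$ and $\nabla w\cdot\nu = 0$ kill all boundary terms while $w(T,\cdot)=0$ removes the terminal slice — I obtain, after substituting the dual equation $\partial_t w + A\Delta w = -\theta$,
\begin{equation*}
\int_{Q_T} u\,\theta \le \int_\Omega u_0\, w(0,\cdot)\dx.
\end{equation*}
Since $u \ge 0$, taking the supremum over all such $\theta$ with $\Vert\theta\Vert_{L^2(Q_T)} \le 1$ recovers $\Vert u\Vert_{L^2(Q_T)}$ on the left, so it remains to prove the a priori estimate $\Vert w(0,\cdot)\Vert_{L^2(\Omega)} \le C\Vert\theta\Vert_{L^2(Q_T)}$ for the dual solution and to close with Cauchy--Schwarz.

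The key point — and the main obstacle — is that $A$ is only assumed bounded above and below, with \emph{no} control on $\nabla A$, so the naive energy estimate obtained by multiplying the dual equation by $w$ fails, since it produces an uncontrolled term $\int_\Omega w\,\nabla A\cdot\nabla w$. I circumvent this by testing the dual equation with $-\Delta w$ instead: the diffusion term then contributes $\int_\Omega A(\Delta w)^2 \ge A_1\Vert\Delta w\Vert_{L^2(\Omega)}^2$, in which $A$ enters \emph{only} through its lower bound, while $\int_\Omega \partial_t w\,\Delta w = -\tfrac12\tfrac{\textnormal{d}}{\textnormal{d}t}\Vert\nabla w\Vert_{L^2(\Omega)}^2$ thanks to the Neumann condition. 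Absorbing the source term $-\int_\Omega\theta\Delta w$ by Young's inequality and integrating from $t$ to $T$ (using $\nabla w(T,\cdot)=0$) yields
\begin{equation*}
\sup_{t\in[0,T]}\Vert\nabla w(t,\cdot)\Vert_{L^2(\Omega)}^2 \le \tfrac{1}{A_1}\Vert\theta\Vert_{L^2(Q_T)}^2, \qquad \Vert\Delta w\Vert_{L^2(Q_T)}^2 \le \tfrac{1}{A_1^2}\Vert\theta\Vert_{L^2(Q_T)}^2.
\end{equation*}

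Finally I recover $\Vert w(0,\cdot)\Vert_{L^2(\Omega)}$ by splitting $w(0,\cdot)$ into its spatial mean $\overline{w}(0)=\int_\Omega w(0,\cdot)\dx$ and its fluctuation. The fluctuation is handled by the Poincar\'e--Wirtinger inequality~(\ref{eq:PWI}) together with the gradient bound above. For the mean I integrate the dual equation over $\Omega$; here the otherwise troublesome term $\int_\Omega A\Delta w$ is no longer an obstruction, because I may estimate it crudely by $A_2\Vert\Delta w\Vert_{L^1(\Omega)} \le A_2\Vert\Delta w\Vert_{L^2(\Omega)}$ (using $\vert\Omega\vert=1$) and invoke the just-obtained $L^2(Q_T)$ bound on $\Delta w$; integrating in time from $t$ to $T$ with $\overline{w}(T)=0$ and applying Cauchy--Schwarz in $t$ controls $\vert\overline{w}(0)\vert$ by $C(T,A_1,A_2)\Vert\theta\Vert_{L^2(Q_T)}$. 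Combining the two pieces gives $\Vert w(0,\cdot)\Vert_{L^2(\Omega)} \le C(\Omega,A_1,A_2,T)\Vert\theta\Vert_{L^2(Q_T)}$; Cauchy--Schwarz applied to $\int_\Omega u_0 w(0,\cdot)\dx$ and the supremum over $\theta$ then give~(\ref{eq:ER:s2:41}). Throughout I would justify the integration by parts for the weak solution by a standard regularization/approximation, and I note that the nonnegativity of $u$ is precisely what lets the dual pairing reproduce the full $L^2$ norm.
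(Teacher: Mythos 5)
Your proof is correct and follows essentially the same route as the paper: the same backward dual problem, the same trick of testing it with $-\Delta w$ so that only the bounds $A_1\le A\le A_2$ (and not $\nabla A$) enter, and the same mean/fluctuation splitting via Poincar\'e--Wirtinger to control the dual solution at $t=0$. The only cosmetic differences are that the paper keeps a general source $F$ and substitutes $F=Au$ at the end rather than taking a supremum over normalized $\theta$, and it estimates the pairing $\int_\Omega u_0 v_0$ directly instead of first bounding $\Vert w(0,\cdot)\Vert_{L^2(\Omega)}$.
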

\begin{proof} Let us consider an adjoint problem: find a nonnegative function $v \in C(I;L^2(\Omega))$ which is regular in the sense that $\partial_t v, \Delta v \in L^2(Q_T)$ and satisfies
\begin{equation}\label{eq:ER:s2:42}
\left\{ \begin{aligned} & - \partial_t v - A \Delta v = F, \\
& \nabla v \cdot \nu = 0, \quad \forall t \in I, \; x \in \partial \Omega,  \\
& v(T,x) = 0, \\
\end{aligned} \right.
\end{equation}
for $F = F(t,x) \in L^2(Q_T)$ nonnegative. The existence of such $v$ follows from the classical results on parabolic equations \cite{Lady1968}. 

By combining equations for $u$ and $v$, we can readily check that
\[ -\dfrac{\textnormal{d}}{\dt} \int_{\Omega} u v \ge \int_{\Omega} u F \]
which, by using $v(T)=0$, yields
\begin{equation}\label{eq:ER:s2:43}
\int_{Q_T} u F \le \int_{\Omega}  u_0 v_0. 
\end{equation}
By multiplying equation for $v$ in~(\ref{eq:ER:s2:42}) by $-\Delta v$, integrating per partes and using the Young inequality, we obtain
\[ -\dfrac{1}{2}\dfrac{\textnormal{d}}{\dt} \int_{\Omega} \vert \nabla v \vert^2 + \int_{\Omega} A (\Delta v)^2  = - \int_{\Omega} F \Delta v \le \int_{\Omega} \dfrac{F^2}{2A} + \dfrac{A}{2}(\Delta v)^2, \]
i.e.
\[ -\dfrac{\textnormal{d}}{\dt} \int_{\Omega} \vert \nabla v \vert^2 + \int_{\Omega} A (\Delta v)^2 \le \int_{\Omega} \dfrac{F^2}{A}. \]
Integrating this over $[0,T]$ and using $v(T)=0$ gives
\[ \int_{\Omega} \vert \nabla v_0 \vert^2 + \int_{Q_T} A (\Delta v)^2 \le \int_{Q_T} \dfrac{F^2}{A}. \]
Thus we obtain the a-priori bounds
\begin{equation}\label{eq:ER:s2:44}
\Vert \nabla v_0 \Vert_{L^2(\Omega,\re^d)} \le \left\| \dfrac{F}{\sqrt{A}} \right\|_{L^2(Q_T)} \quad \textnormal{and} \quad \Vert \sqrt{A} \Delta v \Vert_{L^2(\Omega)} \le \left\| \dfrac{F}{\sqrt{A}} \right\|_{L^2(Q_T)}.
\end{equation}
From the equation for $v$ we can write (again, by integrating this equation over $\Omega$ and $[0,T]$ and using $v(T)=0$)
\[ \int_{\Omega} v_0 =  \int_{Q_T} A \Delta v + F. \]
Hence,
\begin{equation}\label{eq:ER:s2:45} \begin{aligned}
 \int_{\Omega} v_0 =  \int_{Q_T} \sqrt{A} \left( \sqrt{A} \Delta v + \dfrac{F}{\sqrt{A}}\right) & \le \Vert \sqrt{A} \Vert_{L^2(Q_T)} \left\| \sqrt{A}\Delta v +  \dfrac{F}{\sqrt{A}} \right\|_{L^2(Q_T)} \\
 & \le 2 \Vert \sqrt{A} \Vert_{L^2(Q_T)} \left\| \dfrac{F}{\sqrt{A}} \right\|_{L^2(Q_T)},
\end{aligned} \end{equation}
which follows from the H\"{o}lder inequality and~(\ref{eq:ER:s2:44}).

To conclude the proof, let us return to~(\ref{eq:ER:s2:43}) and write
\[ \begin{aligned}
0 \le \int_{Q_T} u F \le \int_{\Omega}  u_0 v_0 &= \int_{\Omega}  u_0( v_0 - \overline{v_0}) + u_0\overline{v_0}\\
& \le \Vert u_0 \Vert_{L^2(\Omega)}\Vert  v_0 - \overline{v_0} \Vert_{L^2(\Omega)} + \int_{\Omega} \overline{u_0} v_0 \\
& \le C(\Omega) \Vert u_0 \Vert_{L^2(\Omega)} \Vert \nabla v_0  \Vert_{L^2(\Omega,\re^d)} + \overline{u_0} \int_{\Omega} v_0,
\end{aligned}\]
where we have used the H\"{o}lder and Poincar\'{e}-Wirtinger
inequalities, respectively. Recall that $\overline{v} = \dfrac{1}{\vert \Omega \vert} {\displaystyle \int_{\Omega} v \dx}.$ The norm of the gradient $v_0$ can be estimated by~(\ref{eq:ER:s2:44}) and the last remaining integral by~(\ref{eq:ER:s2:45}) so that we obtain
\begin{equation}\label{eq:ER:s2:46} 
\int_{Q_T} u F \le \left( C(\Omega) \Vert u_0 \Vert_{L^2(\Omega)} + 2\overline{u_0} \Vert \sqrt{A} \Vert_{L^2(Q_T)} \right) \left\| \dfrac{F}{\sqrt{A}} \right\|_{L^2(Q_T)},
\end{equation}
which holds true for any $F \in L^2(Q_T)$. Thus, for $F = Au$ we can finally write
\begin{equation}\label{eq:ER:s2:46} 
\Vert \sqrt{A}u\Vert_{L^2(Q_T)} \le C(\Omega) \Vert u_0 \Vert_{L^2(\Omega)} + 2 \overline{u_0} \Vert \sqrt{A} \Vert_{L^2(Q_T)}
\end{equation}
and deduce~(\ref{eq:ER:s2:41}) by using the boundedness of $A$, i.e. $A_1 \le A(t,x) \le A_2$. 
\end{proof}

\section*{Acknowledgements}
This work was partially supported by a public grant as part of the Investissement d'avenir project, reference ANR-11-LABX-0056-LMH, LabEx LMH. The author would like to thank to Bao Tang and Beno\^{i}t Perthame for useful discussions and suggestions.

\bibliography{biblio}

\begin{thebibliography}{10}

\bibitem{Arnold1}
A.~Arnold, J.~A. Carrillo, L.~Desvillettes, J.~Dolbeault, A.~J{\"u}ngel,
  C.~Lederman, P.~A. Markowich, G.~Toscani, and C.~Villani.
\newblock Entropies and equilibria of many-particle systems: An essay on recent
  research.
\newblock {\em Monatshefte f{\"u}r Mathematik}, 142(1-2):35--43, 2004.

\bibitem{Bothe-2015}
D.~Bothe and G.~Rolland.
\newblock Global existence for a class of reaction-diffusion systems with mass
  action kinetics and concentration-dependent diffusivities.
\newblock {\em Acta Appl. Math.}, 139(1):25--57, 2015.

\bibitem{Briggs-1925}
G.~E. Briggs and J.~B.~S. Haldane.
\newblock A note on the kinetics of enzyme action.
\newblock {\em Biochemical Journal}, 19:338--339, 1925.

\bibitem{Carrillo-2001}
J.~A. Carrillo, A.~J{\"u}ngel, P.~A. Markowich, G.~Toscani, and A.~Unterreiter.
\newblock Entropy dissipation methods for degenerate parabolic problems and
  generalized {S}obolev inequalities.
\newblock {\em Monatshefte f{\"u}r Mathematik}, 133(1):1--82, 2001.

\bibitem{Cornish-Bowden-2012}
A.~Cornish-Bowden.
\newblock {\em Fundamentals of Enzyme Kinetics}.
\newblock Wiley-Blackwell, 4th edition, 2012.

\bibitem{DF2}
L.~Desvillettes and K.~Fellner.
\newblock Exponential decay toward equilibrium via entropy methods for
  reaction--diffusion equations.
\newblock {\em Journal of Mathematical Analysis and Applications},
  319(1):157--176, 2006.

\bibitem{DF3}
L.~Desvillettes and K.~Fellner.
\newblock Entropy methods for reaction-diffusion equations: slowly growing
  a-priori bounds.
\newblock {\em Revista Matem\'atica Iberoamericana}, 24:407--431, 2008.

\bibitem{DF1}
L.~Desvillettes and K.~Fellner.
\newblock Exponential convergence to equilibrium for a nonlinear
  reaction-diffusion systems arising in reversible chemistry.
\newblock {\em System Modeling and Optimization, IFIP AICT}, 443:96--104, 2014.

\bibitem{DFPV}
L.~Desvillettes, K.~Fellner, M.~Pierre, and J.~Vovelle.
\newblock About global existence for quadratic systems of reaction-diffusion.
\newblock {\em Journal Advanced Nonlinear Studies}, 7:491--511, 2007.

\bibitem{EliasThesis}
J.~Elia\v{s}.
\newblock {\em Mathematical model of the role and temporal dynamics of protein
  p53 after drug-induced DNA damage}.
\newblock PhD thesis, Pierre and Marie Curie University, 2015.

\bibitem{Fellner-2015a}
K.~Fellner, W.~Prager, and B.~Q. Tang.
\newblock The entropy method for reaction-diffusion systems without detailed
  balance: first order chemical reaction networks.
\newblock {\em arXiv preprint arXiv:1504.08221}, 2015.

\bibitem{Fellner-2016a}
K.~Fellner and B.~Q. Tang.
\newblock Explicit exponential convergence to equilibrium for nonlinear
  reaction-diffusion systems with detailed balance condition.
\newblock {\em arXiv preprint arXiv:1601.05992}, 2016.

\bibitem{Fitzgibbon-1997}
W.~B. Fitzgibbon, S.~L. Hollis, and J.~J. Morgan.
\newblock Stability and {L}yapunov functions for reaction-diffusion systems.
\newblock {\em SIAM Journal on Mathematical Analysis}, 28(3):595--610, 1997.

\bibitem{Gorban-2010}
A.~N. Gorban, P.~A. Gorban, and G.~Judge.
\newblock Entropy: The markov ordering approach.
\newblock {\em Entropy}, 12(5), 2010.

\bibitem{Lady1968}
O.~A. Ladyzhenskaya, V.~A. Solonnikov, and N.~N. Ura\v{l}tseva.
\newblock {\em Linear and quasi-linear equations of parabolic type}, volume~23.
\newblock American Mathematical Soc., 1968.

\bibitem{Michaelis-1913}
L.~Michaelis and M.~Menten.
\newblock Die kinetik der invertinwirkung.
\newblock {\em Biochem. Z.}, 49:333--369, 1913.

\bibitem{Mielke-2014}
A.~Mielke, J.~Haskovec, and P.~A. Markowich.
\newblock On uniform decay of the entropy for reaction--diffusion systems.
\newblock {\em Journal of Dynamics and Differential Equations}, 27(3):897--928,
  2014.

\bibitem{Perthame-LN}
B.~Perthame.
\newblock {\em Parabolic Equations in Biology: Growth, reaction, movement and
  diffusion}.
\newblock Lecture Notes on Mathematical Modelling in the Life Sciences.
  Springer International Publishing, 2015.

\bibitem{Pierre1}
M.~Pierre.
\newblock Global existence in reaction-diffusion systems with control of mass:
  a survey.
\newblock {\em Milan Journal of Mathematics}, 78(2):417--455, 2010.

\bibitem{PSch}
M.~Pierre and D.~Schmitt.
\newblock Blow-up in reaction diffusion systems with dissipation of mass.
\newblock {\em Journal on Mathematical Analysis}, 28(2):259--269, 1997.

\bibitem{Roubicek-2013}
T.~Roub\'{i}\v{c}ek.
\newblock {\em Nonlinear Partial Differential Equations with Application},
  volume 153 of {\em Intl. Ser. Numer. Math}.
\newblock Birkh\"{a}user Basel, second edition, 2013.

\bibitem{Schnell-2000}
S.~Schnell and P.~K. Maini.
\newblock Enzyme kinetics at high enzyme concentration.
\newblock {\em Bulletin of Mathematical Biology}, 62(3):483--499, 2000.

\bibitem{Schnell-2002}
S.~Schnell and P.~K. Maini.
\newblock Enzyme kinetics far from the standard quasi-steady-state and
  equilibrium approximations.
\newblock {\em Mathematical and Computer Modelling}, 35(1--2):137--144, 2002.

\bibitem{Segel-1989}
L.~A. Segel and M.~Slemrod.
\newblock The quasi-steady-state assumption: a case study in perturbation.
\newblock {\em SIAM Review}, 31(3):446--477, 1989.

\bibitem{Villani-2003}
C.~Villani.
\newblock {\em Topics in optimal transportation}.
\newblock American Mathematical Soc., 2003.

\end{thebibliography}

\end{document}